\numberwithin{equation}{section}
\newtheorem{theorem}{Theorem}[section]
\newtheorem{definition}[theorem]{Definition}
\newtheorem{proposition}[theorem]{Proposition}
\newtheorem{remark}[theorem]{Remark}
\newenvironment{proof}[1][Proof]{\noindent\textbf{#1.} }{\ \rule{0.5em}{0.5em}}
\begin{document}

\title{$C^{k,\alpha}$-regularity of solutions to \\quasilinear equations structured on \\H\"{o}rmander's vector fields\thanks{2000 AMS Classification: Primary 35H20.
Secondary: 35J62, 35B65, 42B20. Key-words: H\"{o}rmander's vector fields,
quasilinear equations, $C^{k,\alpha}$ regularity.}}
\author{Marco Bramanti, Maria Stella Fanciullo}
\maketitle

\begin{abstract}
For a linear nonvariational operator structured on smooth H\"{o}rmander's
vector fields, with H\"{o}lder continuous coefficients, we prove a regularity
result in the scale of $C_{X}^{k,\alpha}$ spaces. We deduce an analogous
regularity result for nonvariational degenerate quasilinear equations.

\end{abstract}

\section*{Introduction}

Let $X_{1},X_{2},...,X_{q}$ be a system of smooth H\"{o}rmander's vector
fields in a bounded smooth domain $\Omega$ of $\mathbb{R}^{n}$, with $q<n$
(see \S \ref{sec Hormander} for precise definitions). Nonvariational operators
of the kind%
\[
L=\sum_{i,j=1}^{q}a_{ij}\left(  x\right)  X_{i}X_{j}+\sum_{i=1}^{q}%
b_{i}\left(  x\right)  X_{i}+c\left(  x\right)  \,,
\]
with $\left\{  a_{ij}\right\}  $ real symmetric uniformly positive matrix,
have been studied by several authors, establishing in particular local a
priori estimates on $X_{i}X_{j}u$ in H\"{o}lder or $L^{p}$ spaces, in terms of
$Lu$ and $u,$ and assuming the coefficients $a_{ij}$ bounded and,
respectively, H\"{o}lder continuous or VMO: see \cite{BB2000}, \cite{BZ2} for
$L^{p}$ estimates and \cite{BB2007}, \cite{BZ2}, \cite{GL} for Schauder
estimates. In particular, in \cite{BB2007} evolution operators of the kind
\[
H=\partial_{t}-\sum_{i,j=1}^{q}a_{ij}\left(  t,x\right)  X_{i}X_{j}+\sum
_{i=1}^{q}b_{i}\left(  t,x\right)  X_{i}+c\left(  t,x\right)
\]
have been studied, and local a priori Schauder estimates of the following kind
have been proved: if $a_{ij},b_{i},c\in C^{k,\alpha}\left(  U\right)  $ for
some integer $k\geqslant0$ and some $\alpha\in\left(  0,1\right)  ,$ then for
every domain $U^{\prime}\Subset U$, $u\in C_{loc}^{k+2,\alpha}\left(
U\right)  $ with $Hu\in C^{k,\alpha}\left(  U\right)  ,$%
\[
\left\Vert u\right\Vert _{C^{k+2,\alpha}\left(  U^{\prime}\right)  }\leqslant
c\left\{  \left\Vert Hu\right\Vert _{C^{k,\alpha}\left(  U\right)
}+\left\Vert u\right\Vert _{L^{\infty}\left(  U\right)  }\right\}  .
\]
Here the H\"{o}lder spaces $C^{k,\alpha}$ are those defined by means of
derivatives with respect to the $X_{i}$'s and the distance induced by the
vector fields (more precisely, the parabolic version of these spaces, with the
time derivative weighting as a second order derivative, see
\S \ref{sec Hormander} and \S \ref{sec evolution} for precise definitions).

Note that the previous estimate assumes \textit{a priori }that $u\in
C_{loc}^{k+2,\alpha}\left(  U\right)  $. A more subtle problem is that of
proving a regularity result of the kind: if $u\in C^{2,\alpha}\left(
U\right)  $ solves $Hu=f$ and $a_{ij},b_{i},c,f\in C^{k,\alpha}\left(
U\right)  $ then actually $u\in C_{loc}^{k+2,\alpha}\left(  U\right)  $ (and
therefore the above a priori estimate holds). In \cite{BB2007} this regularity
result is actually proved, applying the classical strategy of regularizing the
coefficients and data of the equation, solving the regularized Dirichlet
problem and exploiting the a priori estimate to build a sequence of smooth
functions converging in $C_{loc}^{k+2,\alpha}\left(  U\right)  $ to the
solution of $Hu=f$. However, using this approach in \cite{BB2007} the
boundedness of the approximating sequence is proved only for $k$ even, hence
the regularity result has been proved so far only for $k$ even. In the present
paper, exploiting the a priori estimates proved in \cite{BB2007}, we find a
different way of proving a regularity result which holds for all $k$ (see
Theorem \ref{thm main linear}), based on the Banach-Caccioppoli fixed point
theorem. The above result and a standard bootstrap argument enable us to prove
a Schauder regularity result for quasilinear equations of the kind%
\[
Qu\equiv\sum_{i,j=1}^{q}a_{ij}\left(  x,u,Xu\right)  X_{i}X_{j}u+b\left(
x,u,Xu\right)  =0,
\]
with $\left\{  a_{ij}\right\}  $ uniformly positive, concluding that, in
particular, any $C^{2,\alpha}\left(  \Omega\right)  $ solution to $Qu=0$ is
smooth as soon as $a_{ij},b$ are smooth (see Theorem
\ref{Thm main quasilinear}). Finally, in view of the results in \cite{BB2007},
both the linear and the quasilinear regularity results described above can be
easily extended to evolution operators $\partial_{t}-L$, $\partial_{t}-Q$ (see
Theorems \ref{Thm linear evolution}, \ref{Thm evolution quasilinear}).
Actually, we have written our proofs in the stationary case just to simplify notation.

We mention that in the paper \cite{Xu} this regularity result is also stated,
but the Author makes an extra assumption on the structure of the $X_{i}$'s
(which does not cover general H\"{o}rmander's vector fields) and he actually
proves only local a priori estimates, not a regularity result.

\section{Preliminaries and known results}

\subsection{H\"{o}rmander's vector fields, control distance and H\"{o}lder
spaces\label{sec Hormander}}

Let $X_{1},\ldots,X_{q}$ be a system of real smooth vector fields,%
\[
X_{i}=\sum_{j=1}^{n}b_{ij}\left(  x\right)  \partial_{x_{j}},\text{
\ }i=1,2,...,q
\]
$\left(  q<n\right)  $ defined in some bounded, open and connected subset
$\Omega_{0}$ of $\mathbb{R}^{n}$. For any multiindex%
\[
I=(i_{1},i_{2},...,i_{k}),\text{ \ \ }1\leq i_{j}\leq q
\]
we set:%
\[
X_{[I]}=\left[  X_{i_{1}},\left[  X_{i_{2}},\ldots\left[  X_{i_{k-1}}%
,X_{i_{k}}\right]  \ldots\right]  \right]  ,
\]
where $\left[  X,Y\right]  =XY-YX$\ for any couple of vector fields $X,Y$. We
will say that $X_{[I]}$ is a \emph{commutator of length }$|I|=k.$ As usual,
$X_{i}$ can be seen either as a differential operator or as a vector field. We
will write $X_{i}f$ to denote the differential operator $X_{i}$ acting on a
function $f$, and $\left(  X_{i}\right)  _{x}$ to denote the vector field
$X_{i}$ evaluated at the point $x\in\Omega_{0}$. We shall say that
$X_{1},\ldots,X_{q}$ satisfy \emph{H\"{o}rmander's condition of step }$s$ in
$\Omega_{0}$ if these vector fields, together with their commutators of length
$\leq s$, span the tangent space at every point $x\in\Omega_{0}$.

One can now define the control distance induced by these vector fields, as in
\cite{NSW}:

\begin{definition}
\label{c-c-distance}For any $\delta>0$, let $C\left(  \delta\right)  $ be the
class of absolutely continuous mappings $\varphi:[0,1]\rightarrow\Omega_{0}$
which satisfy%
\[
\varphi^{\prime}(t)=\overset{q}{\underset{i=1}{\sum}}\lambda_{i}(t)\left(
X_{i}\right)  _{\varphi\left(  t\right)  }\text{ a.e. }t\in\left(  0,1\right)
\]
with $\left\vert \lambda_{j}(t)\right\vert \leq\delta$ for $j=1,...,q$. We
define%
\[
d_{X}(x,y)=\inf\left\{  \delta:\exists\varphi\in C\left(  \delta\right)
\text{ with }\varphi\left(  0\right)  =x,\varphi\left(  1\right)  =y\right\}
.
\]

\end{definition}

Note that the finiteness of $d_{X}\left(  x,y\right)  $ for any two points
$x,y\in\Omega_{0}$ is not a trivial fact, but depends on a connectivity result
(\textquotedblleft Chow's theorem\textquotedblright); moreover, it can be
proved that $d_{X}$ is a distance. It is also well-known that this distance is
topologically equivalent to the Euclidean one (see \cite{NSW} for all these facts).

Now, let $\Omega\subset\Omega_{0}$ be another fixed domain. For any\ $x\in
\Omega$, we set%
\[
B_{r}\left(  x\right)  =\left\{  y\in\Omega_{0}:d_{X}\left(  x,y\right)
<r\right\}  .
\]

Let us define several types of H\"{o}lder spaces that we will need in the following:

\begin{definition}
For any $\alpha\in\left(  0,1\right)  ,u:\Omega\rightarrow\mathbb{R}$, let:%
\begin{align*}
\left\vert u\right\vert _{C_{X}^{\alpha}\left(  \Omega\right)  }  &
=\sup\left\{  \frac{\left\vert u\left(  x\right)  -u\left(  y\right)
\right\vert }{d_{X}\left(  x,y\right)  ^{\alpha}}:x,y\in\Omega,x\neq
y\right\}  ,\\
\left\Vert u\right\Vert _{C_{X}^{\alpha}\left(  \Omega\right)  }  &
=\left\vert u\right\vert _{C^{\alpha}\left(  \Omega\right)  }+\left\Vert
u\right\Vert _{L^{\infty}\left(  \Omega\right)  },\\
C_{X}^{\alpha}\left(  \Omega\right)   &  =\left\{  u:\Omega\rightarrow
\mathbb{R}:\left\Vert u\right\Vert _{C^{\alpha}\left(  \Omega\right)  }%
<\infty\right\}  .
\end{align*}
Also, for any positive integer $k$, let%
\[
C_{X}^{k,\alpha}\left(  \Omega\right)  =\left\{  u:\Omega\rightarrow
\mathbb{R}:\left\Vert u\right\Vert _{C^{k,\alpha}\left(  \Omega\right)
}<\infty\right\}  ,\text{ \ }%
\]
with%
\[
\left\Vert u\right\Vert _{C_{X}^{k,\alpha}\left(  \Omega\right)  }%
=\underset{l=1}{\overset{k}{\sum}}\underset{j_{i}=1}{\overset{q}{\sum}%
}\left\Vert X_{j_{1}}\ldots X_{j_{l}}u\right\Vert _{C^{\alpha}\left(
\Omega\right)  }+\left\Vert u\right\Vert _{C^{\alpha}\left(  \Omega\right)
}.
\]

We will set $C_{X,0}^{\alpha}\left(  \Omega\right)  $ and $C_{X,0}^{k,\alpha
}\left(  \Omega\right)  $ for the subspaces of $C_{X}^{\alpha}\left(
\Omega\right)  $ and $C_{X}^{k,\alpha}\left(  \Omega\right)  $ of functions
which are compactly supported in $\Omega$, and $C_{X,loc}^{k,\alpha}\left(
\Omega\right)  $ for the space of functions belonging to $C_{X}^{k,\alpha
}\left(  \Omega^{\prime}\right)  $ for every $\Omega^{\prime}\Subset\Omega$.

Finally, we will write $C_{X,\ast}^{k,\alpha}\left(  \Omega\right)  $ to
denote the subspace of $C_{X}^{k,\alpha}\left(  \Omega\right)  $ consisting of
functions $u$ such that both $u$ and all the derivatives $X_{i_{1}}X_{i_{2}%
}...X_{i_{l}}u$ $\left(  l\leq k\right)  $ vanish on $\partial\Omega.$
\end{definition}

\begin{proposition}
\label{prop extension}The spaces $C_{X,\ast}^{k,\alpha}\left(  \Omega\right)
$ are complete. Moreover, if $u\in C_{X,\ast}^{k,\alpha}\left(  B_{r}\left(
x_{0}\right)  \right)  $ and $R>r$ for some $B_{R}\left(  x_{0}\right)
\subset\Omega,$ then
\[
\overline{u}\left(  x\right)  =\left\{
\begin{array}
[c]{l}%
u\left(  x\right)  \text{ in }B_{r}\left(  x_{0}\right) \\
0\text{ in }B_{R}\left(  x_{0}\right)  \setminus B_{r}\left(  x_{0}\right)
\end{array}
\right.
\]
belongs to $C_{X,0}^{k,\alpha}\left(  B_{R}\left(  x_{0}\right)  \right)  .$
\end{proposition}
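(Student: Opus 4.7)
My plan is to treat the completeness statement and the zero-extension statement independently, in both cases reducing higher-order information to the scalar $C_X^\alpha$ setting one derivative at a time.

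For completeness, I would start from a Cauchy sequence $\{u_m\}$ in $C_{X,\ast}^{k,\alpha}(\Omega)$ and observe that for every multi-index $I=(j_1,\ldots,j_l)$ with $l\leq k$, the derivatives $X_{j_1}\cdots X_{j_l}u_m$ form a Cauchy sequence in $C_X^\alpha(\Omega)$. The scalar space is complete by the usual argument: a Cauchy sequence there converges uniformly to a bounded continuous limit, and the H\"older seminorm passes to the pointwise limit. Call the resulting limits $v_I$, with $u:=v_\emptyset$. The identification $v_I=X_{j_1}\cdots X_{j_l}u$ proceeds by induction on $|I|$: the inductive step passes to the distributional limit in $X_i(X_Ju_m)\to v_{iJ}$ by testing against $\phi\in C_c^\infty(\Omega)$ and using the smooth formal adjoint $X_i^\ast$, and a mollification argument along the integral curves of $X_i$ upgrades the distributional identity to a classical directional derivative since the limit is continuous. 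Uniform convergence preserves the vanishing of each $X_I u_m$ on $\partial\Omega$, so $u\in C_{X,\ast}^{k,\alpha}(\Omega)$.

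For the extension, I would define $\overline{u}$ by zero on $B_R(x_0)\setminus B_r(x_0)$ and the analogous $\overline{X_Iu}$ for each $|I|\leq k$, then show that $X_I\overline{u}=\overline{X_Iu}$ distributionally. For a test function $\phi\in C_c^\infty(B_R(x_0))$, an integration by parts on $B_r(x_0)$ gives
\[
\langle X_i\overline{u},\phi\rangle=\int_{B_r(x_0)}u\,X_i^\ast\phi\,dx=\int_{B_r(x_0)}X_iu\cdot\phi\,dx=\langle\overline{X_iu},\phi\rangle,
\]
the boundary contribution vanishing because $u|_{\partial B_r(x_0)}=0$ by hypothesis; iteration handles arbitrary $|I|\leq k$, and each $\overline{X_Iu}$ is continuous on $B_R(x_0)$.

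The genuine obstacle is the H\"older bound on $\overline{X_Iu}$ across $\partial B_r(x_0)$, which I would obtain by splitting pairs $x,y\in B_R(x_0)$ into cases. Both points in $\overline{B_r(x_0)}$ or both outside are immediate (the first from the hypothesis on $u$, the second because both values are zero). In the mixed case $x\in B_r(x_0)$, $y\notin B_r(x_0)$, any $\varphi\in C(\delta)$ joining $x$ to $y$ must cross $\partial B_r(x_0)$ at some time $t^\ast\in(0,1)$; the rescaled curve $\psi(s):=\varphi(st^\ast)$ then lies in $C(t^\ast\delta)\subset C(\delta)$, which forces $d_X(x,\partial B_r(x_0))\leq d_X(x,y)$. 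Compactness of $\partial B_r(x_0)$ and continuity of $d_X(x,\cdot)$ produce some $z\in\partial B_r(x_0)$ with $d_X(x,z)\leq d_X(x,y)$, and then
\[
|\overline{X_Iu}(x)-\overline{X_Iu}(y)|=|X_Iu(x)-X_Iu(z)|\leq[X_Iu]_{C_X^\alpha(B_r(x_0))}\,d_X(x,y)^\alpha.
\]
Since $\overline{u}$ is supported in $\overline{B_r(x_0)}\Subset B_R(x_0)$ (using $R>r$), this yields $\overline{u}\in C_{X,0}^{k,\alpha}(B_R(x_0))$.
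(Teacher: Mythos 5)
Your argument for the decisive mixed case ($x\in B_r(x_0)$, $y\notin B_r(x_0)$) is essentially the paper's: both take a near-optimal admissible curve from $x$ to $y$, locate a crossing point $z\in\partial B_r(x_0)$ at distance at most $d_X(x,y)+\varepsilon$ from $x$, and use $u(z)=0$; your explicit reparametrization $\psi(s)=\varphi(st^\ast)$ merely spells out the step the paper leaves implicit, and the other two cases are handled identically. The completeness proof and the identification $X_I\overline{u}=\overline{X_Iu}$ are left to the reader in the paper, so your added detail there is welcome, with the one caveat that the integration by parts over the metric ball $B_r(x_0)$ tacitly assumes its boundary is regular enough for Gauss--Green, which for control-distance balls deserves at least a remark.
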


\begin{proof}
We leave to the reader to check the completeness of these spaces. Let us prove
the second assertion for $k=0$, since the same argument applies to the
derivatives. It is enough to check that%
\[
\left\vert \overline{u}\left(  x\right)  -\overline{u}\left(  y\right)
\right\vert \leq cd_{X}\left(  x,y\right)  ^{\alpha}\text{ for any }x\in
B_{r}\left(  x_{0}\right)  ,y\in B_{R}\left(  x_{0}\right)  \setminus
B_{r}\left(  x_{0}\right)  ,
\]
the other cases being obvious. By definition of $d_{X}$, for any fixed
$\varepsilon>0$, we can pick a curve $\gamma:\left[  0,1\right]
\rightarrow\Omega$ such that%
\begin{align*}
\gamma\left(  0\right)   &  =x,\gamma\left(  1\right)  =y\\
\gamma^{\prime}\left(  t\right)   &  =\sum_{i=1}^{q}\lambda_{i}\left(
t\right)  \left(  X_{i}\right)  _{\gamma\left(  t\right)  }\text{ with
}\left\vert \lambda_{i}\left(  t\right)  \right\vert \leq d_{X}\left(
x,y\right)  +\varepsilon.
\end{align*}
Since $d_{X}\left(  x_{0},\gamma\left(  0\right)  \right)  <r$ and
$d_{X}\left(  x_{0},\gamma\left(  1\right)  \right)  >r,$ there exists a point
$z=\gamma\left(  \overline{t}\right)  $ such that $d\left(  x_{0},z\right)
=r$ and $u\left(  z\right)  =0$. Then%
\begin{align*}
\left\vert \overline{u}\left(  x\right)  -\overline{u}\left(  y\right)
\right\vert  &  =\left\vert u\left(  x\right)  \right\vert =\left\vert
u\left(  x\right)  -u\left(  z\right)  \right\vert \\
&  \leq\left\vert u\right\vert _{C^{\alpha}}d_{X}\left(  x,z\right)  ^{\alpha
}\leq\left\vert u\right\vert _{C^{\alpha}}\left(  d_{X}\left(  x,y\right)
+\varepsilon\right)  ^{\alpha}.
\end{align*}
Since this is true for every $\varepsilon>0,$ we are done.
\end{proof}

The following easy properties of our function spaces will be useful:

\begin{proposition}
\label{basic properties for holder norm}(See \cite[Proposition 4.2]{BB2007},
also \cite[Prop.3.27]{BZ2}) Let $B_{R}\left(  \overline{x}\right)
\subset\Omega$, then

(i) For any $f\in C_{X,0}^{1}\left(  B_{R}\left(  \overline{x}\right)
\right)  $, one has
\begin{equation}
\left\vert f(x)-f(y)\right\vert \leq d_{X}\left(  x,y\right)  \sum_{i=1}%
^{q}\underset{B_{R}\left(  \overline{x}\right)  }{\sup}\left\vert
X_{i}f\right\vert \label{4.6}%
\end{equation}
for any $x,y\in B_{R}\left(  \overline{x}\right)  $.

(ii) For any couple of functions $f,g\in C_{X}^{\alpha}\left(  B_{R}\left(
\overline{x}\right)  \right)  $, one has
\[
\left\vert fg\right\vert _{C_{X}^{\alpha}\left(  B_{R}\left(  \overline
{x}\right)  \right)  }\leq\left\vert f\right\vert _{C_{X}^{\alpha}\left(
B_{R}\left(  \overline{x}\right)  \right)  }\left\Vert g\right\Vert
_{L^{\infty}\left(  B_{R}\left(  \overline{x}\right)  \right)  }+\left\vert
g\right\vert _{C_{X}^{\alpha}\left(  B_{R}\left(  \overline{x}\right)
\right)  }\left\Vert f\right\Vert _{L^{\infty}\left(  B_{R}\left(
\overline{x}\right)  \right)  }\text{ }%
\]
and%
\begin{equation}
\left\Vert fg\right\Vert _{C_{X}^{\alpha}\left(  B_{R}\left(  \overline
{x}\right)  \right)  }\leq2\left\Vert f\right\Vert _{C_{X}^{\alpha}\left(
B_{R}\left(  \overline{x}\right)  \right)  }\left\Vert g\right\Vert
_{C_{X}^{\alpha}\left(  B_{R}\left(  \overline{x}\right)  \right)  }.
\label{1.2}%
\end{equation}

\end{proposition}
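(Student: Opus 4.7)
For part (i), the idea is to realize $f(y)-f(x)$ as an integral of $(f\circ\gamma)'$ along a control curve $\gamma$ joining $x$ to $y$. First I would invoke the compact support hypothesis: since $f\in C_{X,0}^{1}(B_{R}(\overline{x}))$, it vanishes off a compact subset of $B_{R}(\overline{x})$, and the zero extension (cf.\ Proposition \ref{prop extension}) lies in $C_{X,0}^{1}(\Omega_{0})$ with the same supremum bounds on the $X_{i}f$. Then, given $\varepsilon>0$, Definition \ref{c-c-distance} furnishes $\gamma:[0,1]\to\Omega_{0}$ with $\gamma(0)=x$, $\gamma(1)=y$ and $\gamma'(t)=\sum_{i}\lambda_{i}(t)(X_{i})_{\gamma(t)}$, $|\lambda_{i}(t)|\le d_{X}(x,y)+\varepsilon$. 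The chain rule yields $(f\circ\gamma)'(t)=\sum_{i}\lambda_{i}(t)(X_{i}f)(\gamma(t))$ a.e., so absolute continuity of $f\circ\gamma$ gives
\[
|f(y)-f(x)|\le \int_{0}^{1}\sum_{i=1}^{q}|\lambda_{i}(t)|\,|(X_{i}f)(\gamma(t))|\,dt\le (d_{X}(x,y)+\varepsilon)\sum_{i=1}^{q}\sup_{B_{R}(\overline{x})}|X_{i}f|,
\]
where I used that $X_{i}f\equiv 0$ outside the support of $f$ to bound the integrand on the (possibly nonempty) portion of $\gamma$ lying outside $B_{R}(\overline{x})$. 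Letting $\varepsilon\to 0^{+}$ produces \eqref{4.6}.

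For part (ii), the argument is the standard Leibniz decomposition
\[
f(x)g(x)-f(y)g(y)=\bigl(f(x)-f(y)\bigr)g(x)+f(y)\bigl(g(x)-g(y)\bigr).
\]
Dividing by $d_{X}(x,y)^{\alpha}$ and taking the supremum gives the seminorm inequality at once. For the full norm inequality, I would combine this with $\|fg\|_{L^{\infty}}\le \|f\|_{L^{\infty}}\|g\|_{L^{\infty}}$ together with the trivial bounds $|f|_{C_{X}^{\alpha}},\|f\|_{L^{\infty}}\le \|f\|_{C_{X}^{\alpha}}$, and likewise for $g$. A short regrouping then absorbs all three terms into $2\|f\|_{C_{X}^{\alpha}}\|g\|_{C_{X}^{\alpha}}$, yielding \eqref{1.2}; the constant $2$ is a convenient overestimate rather than the sharpest possible bound.

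I do not expect any real obstacle: part (ii) is purely algebraic, and part (i) relies only on the control-curve definition of $d_{X}$ together with standard chain-rule manipulations. The single subtle point I would highlight in a careful write-up is that the control curve $\gamma$ is only guaranteed to remain in $\Omega_{0}$, not in $B_{R}(\overline{x})$; the compact-support assumption $f\in C_{X,0}^{1}$ is precisely what allows the estimate of $(f\circ\gamma)'$ to persist along the portion of $\gamma$ outside the ball.
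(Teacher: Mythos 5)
Your proposal is correct, and since the paper itself offers no proof of this proposition — it is quoted from \cite[Proposition 4.2]{BB2007} and \cite[Prop.~3.27]{BZ2} — there is nothing to diverge from: your argument (integrating $(f\circ\gamma)'$ along a near-optimal control curve for (i), with the compact support of $f$ correctly invoked to handle the fact that $\gamma$ is only guaranteed to stay in $\Omega_{0}$, and the Leibniz splitting for (ii)) is precisely the standard proof given in those references. As a minor remark, your own computation shows the constant in (\ref{1.2}) could be taken to be $1$, since $\left\vert f\right\vert _{C_{X}^{\alpha}}\left\Vert g\right\Vert _{L^{\infty}}+\left\Vert f\right\Vert _{L^{\infty}}\left\vert g\right\vert _{C_{X}^{\alpha}}+\left\Vert f\right\Vert _{L^{\infty}}\left\Vert g\right\Vert _{L^{\infty}}$ consists of three of the four terms in the expansion of $\left\Vert f\right\Vert _{C_{X}^{\alpha}}\left\Vert g\right\Vert _{C_{X}^{\alpha}}$, so the factor $2$ is indeed just a safe overestimate.
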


\subsection{Lifted vector fields and integral operators}

Throughout the paper we will make use of some results and techniques
originally introduced by Rothschild-Stein \cite{rs} and then adapted to
nonvariational operators in \cite{BB2000}, \cite{BB2007}, \cite{BZ2}. However,
in order to understand the proofs in the present paper, it is not necessary
for the reader to know in detail all the background which is implicitly
involved here. Therefore, to reduce the length of this paper we will content
ourselves of pointing out the facts which will be explicitly used, giving to
the interested reader all the relevant references.

First of all, much of the proof of our main results lives in the space of
\textquotedblleft lifted variables\textquotedblright, as in \cite{rs}. This
basically means what follows. For every point $\overline{x}\in\Omega$ there
exists a neighborhood $B_{R}\left(  \overline{x}\right)  \subset\Omega$ and,
in terms of new variables, $h_{n+1},\ldots,h_{N}$, there exist smooth
functions $\lambda_{il}(x,h)$ ($1\leq i\leq q,$ $n+1\leq l\leq N$) defined in
a neighborhood $\widetilde{U}$ of $\overline{\xi}=\left(  \overline
{x},0\right)  \in\mathbb{R}^{N}$ such that the vector fields $\widetilde
{X}_{i}$ given by%
\[
\widetilde{X}_{i}=X_{i}+\underset{l=n+1}{\overset{N}{\sum}}\lambda
_{il}(x,h)\frac{\partial}{\partial h_{l}},\ \ i=1,\ldots,q
\]
still satisfy H\"{o}rmander's condition of step $s$ in $\widetilde{U}$ and
possesses further properties, some of which we are going to recall.

Let us first fix some notation. We will denote by $d_{\widetilde{X}}$ the
control distance induced by the vector fields $\widetilde{X}_{i}$ in
$\widetilde{U}$, by $\widetilde{B}_{R}\left(  \overline{\xi}\right)  $ the
corresponding balls, and we will denote by $C_{\widetilde{X}}^{\alpha}\left(
\widetilde{B}_{R}\left(  \overline{\xi}\right)  \right)  $, $C_{\widetilde{X}%
}^{k,\alpha}\left(  \widetilde{B}_{R}\left(  \overline{\xi}\right)  \right)
$, $C_{\widetilde{X},0}^{\alpha}\left(  \widetilde{B}_{R}\left(  \overline
{\xi}\right)  \right)  $ and $C_{\widetilde{X},0}^{k,\alpha}\left(
\widetilde{B}_{R}\left(  \overline{\xi}\right)  \right)  $ the function spaces
over $\widetilde{B}_{R}\left(  \overline{\xi}\right)  $ defined by the
$\widetilde{X}_{i}$'s as in \S \ref{sec Hormander}.

The following relation between the spaces $C_{X}^{\alpha}\left(  B_{R}\left(
\overline{x}\right)  \right)  $ and $C_{\widetilde{X}}^{\alpha}\left(
\widetilde{B}_{R}\left(  \overline{\xi}\right)  \right)  $ is crucial for us:

\begin{proposition}
\label{Proposition de-lifting}(See \cite[Prop. 8.3]{BB2007}, \cite[Prop.
3.28]{BZ2}) Let $\widetilde{B}_{r}\left(  \overline{\xi}\right)  $ be a lifted
ball, with $\overline{\xi}=\left(  \overline{x},0\right)  $. If $f$ is a
function defined in $B_{2r}\left(  \overline{x}\right)  $ and $\widetilde
{f}\left(  x,h\right)  =f\left(  x\right)  $ is regarded as a function defined
on $\widetilde{B}_{r}\left(  \overline{\xi}\right)  $, then the following
inequalities hold true%
\[
\left\vert \widetilde{f}\right\vert _{C_{\widetilde{X}}^{\alpha}\left(
\widetilde{B}_{r}\left(  \overline{\xi}\right)  \right)  }\leqslant\left\vert
f\right\vert _{C_{X}^{\alpha}\left(  B_{r}\left(  \overline{x}\right)
\right)  }\leqslant c\left\vert \widetilde{f}\right\vert _{C_{\widetilde{X}%
}^{\alpha}\left(  \widetilde{B}_{2r}\left(  \overline{\xi}\right)  \right)
}.
\]
Moreover,%
\[
\left\vert \widetilde{X}_{i_{1}}\widetilde{X}_{i_{2}}...\widetilde{X}_{i_{k}%
}\widetilde{f}\right\vert _{C_{\widetilde{X}}^{\alpha}\left(  \widetilde
{B}_{r}\left(  \overline{\xi}\right)  \right)  }\leqslant\left\vert X_{i_{1}%
}...X_{i_{k}}f\right\vert _{C_{X}^{\alpha}\left(  B_{r}\left(  \overline
{x}\right)  \right)  }\leqslant c\left\vert \widetilde{X}_{i_{1}}%
...\widetilde{X}_{i_{k}}\widetilde{f}\right\vert _{C_{\widetilde{X}}^{\alpha
}\left(  \widetilde{B}_{2r}\left(  \overline{\xi}\right)  \right)  }\text{ }%
\]
for $i_{j}=1,2,...,q.$
\end{proposition}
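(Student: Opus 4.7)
The key structural observation is that $\widetilde{f}(x,h)=f(x)$ is independent of the lifting variables $h$, so each $\lambda_{il}(x,h)\partial_{h_l}$ appearing in $\widetilde{X}_i$ annihilates $\widetilde{f}$, giving $\widetilde{X}_i\widetilde{f}(x,h)=X_i f(x)$. Since the right-hand side is again $h$-independent, iteration yields
\[
\widetilde{X}_{i_1}\cdots\widetilde{X}_{i_k}\widetilde{f}(x,h)=X_{i_1}\cdots X_{i_k}f(x),
\]
so the estimate on $\widetilde{X}$-derivatives of $\widetilde{f}$ reduces to the $C^{\alpha}$ statement applied to the function $X_{i_1}\cdots X_{i_k}f$. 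It therefore suffices to prove the two inequalities for the $C^{\alpha}$ seminorm.

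For the forward inequality, the first $n$ components of $(\widetilde{X}_i)_{(x,h)}$ coincide with those of $(X_i)_x$, so any absolutely continuous curve $\widetilde{\varphi}(t)=(\varphi(t),\psi(t))$ with $\widetilde{\varphi}'=\sum_i\lambda_i(t)(\widetilde{X}_i)_{\widetilde{\varphi}}$ projects onto a curve $\varphi$ satisfying $\varphi'=\sum_i\lambda_i(t)(X_i)_{\varphi}$ with the \emph{same} coefficients $\lambda_i(t)$. Taking infima in Definition \ref{c-c-distance} then gives $d_X(x_1,x_2)\leq d_{\widetilde{X}}((x_1,h_1),(x_2,h_2))$, and the same projection argument shows $(x,h)\in\widetilde{B}_r(\overline{\xi})\Rightarrow x\in B_r(\overline{x})$. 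Combined with $\widetilde{f}(x_1,h_1)-\widetilde{f}(x_2,h_2)=f(x_1)-f(x_2)$, this delivers the first inequality, and the same reasoning applied to the derivatives handles the inner inequality in the second chain as well.

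The reverse inequality is the nontrivial direction and rests on \emph{lifting} base paths upstairs: given an $X$-admissible curve $\varphi$ with parameters $\lambda_i(t)$ and any initial lift $\widetilde{\xi}_0$ of $\varphi(0)$, solving the ODE $\widetilde{\varphi}'(t)=\sum_i\lambda_i(t)(\widetilde{X}_i)_{\widetilde{\varphi}(t)}$ with $\widetilde{\varphi}(0)=\widetilde{\xi}_0$ produces, by uniqueness and the agreement of first $n$ components noted above, a lifted admissible curve whose projection is exactly $\varphi$. For $x_1,x_2\in B_r(\overline{x})$ and $\varepsilon>0$, I would first lift an almost-optimal base path from $\overline{x}$ to $x_1$ starting at $\overline{\xi}$, arriving at a point $\widetilde{\xi}_1=(x_1,h_1)$ with $d_{\widetilde{X}}(\overline{\xi},\widetilde{\xi}_1)\leq d_X(\overline{x},x_1)+\varepsilon$; then from $\widetilde{\xi}_1$ I lift an almost-optimal base path from $x_1$ to $x_2$, arriving at $\widetilde{\xi}_2'=(x_2,h_2')$ with $d_{\widetilde{X}}(\widetilde{\xi}_1,\widetilde{\xi}_2')\leq d_X(x_1,x_2)+\varepsilon$. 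Both $\widetilde{\xi}_1$ and $\widetilde{\xi}_2'$ then lie in a lifted ball whose radius is controlled by $2r$ (modulo the multiplicative constant $c$ in the statement), and since $\widetilde{f}(\widetilde{\xi}_2')=f(x_2)$, applying the Hölder estimate to $\widetilde{f}$ and letting $\varepsilon\to0$ closes the argument. The main obstacle is purely ODE-theoretic: one must ensure the lifting ODE admits a solution on all of $[0,1]$ whose trajectory stays inside the neighborhood $\widetilde{U}$ where the $\lambda_{il}$ are defined, which is guaranteed by the Rothschild--Stein construction of the lifted variables once $R$ is taken small enough for the relevant lifted balls to be relatively compact in $\widetilde{U}$.
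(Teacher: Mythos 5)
First, a point of comparison: the paper does not prove this proposition at all --- it is imported verbatim from \cite[Prop. 8.3]{BB2007} and \cite[Prop. 3.28]{BZ2} --- so there is no in-paper argument to measure yours against. Your strategy is the standard one underlying those references: reduce the derivative estimates to the $C^{\alpha}$ case via the identity $\widetilde{X}_{i_1}\cdots\widetilde{X}_{i_k}\widetilde{f}=X_{i_1}\cdots X_{i_k}f$ (correct, since $\widetilde{f}$ and each successive $\widetilde{X}$-derivative remain independent of $h$), obtain $d_X(x,y)\le d_{\widetilde{X}}((x,h),(y,k))$ by projecting admissible curves (the $\partial_{x_j}$-components of $\widetilde{X}_i$ depend only on $x$), and obtain the reverse seminorm bound by lifting almost-optimal base paths through the ODE $\widetilde{\varphi}'=\sum_i\lambda_i(t)(\widetilde{X}_i)_{\widetilde{\varphi}}$. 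All of this is sound, including your closing remark that the only analytic issue is global-in-$[0,1]$ solvability of the lifting ODE inside $\widetilde{U}$.

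The one concrete gap is the radius bookkeeping in the reverse inequality. Your two-stage lift $\overline{\xi}\to\widetilde{\xi}_1\to\widetilde{\xi}_2'$ gives $d_{\widetilde{X}}(\overline{\xi},\widetilde{\xi}_2')\le d_X(\overline{x},x_1)+d_X(x_1,x_2)+2\varepsilon$, and since $d_X(x_1,x_2)$ can be as large as (nearly) $2r$ for $x_1,x_2\in B_r(\overline{x})$, this only places $\widetilde{\xi}_2'$ in $\widetilde{B}_{3r}(\overline{\xi})$, not in $\widetilde{B}_{2r}(\overline{\xi})$. The multiplicative constant $c$ sits on the seminorm, not on the radius, so the parenthetical ``modulo the multiplicative constant $c$'' does not repair this. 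Two standard fixes: (i) split into the case $d_X(x_1,x_2)<r$, where your argument already lands in $\widetilde{B}_{2r}(\overline{\xi})$, and the case $d_X(x_1,x_2)\ge r$, where you instead estimate $|f(x_1)-f(x_2)|\le|f(x_1)-f(\overline{x})|+|f(\overline{x})-f(x_2)|$ and treat each increment by a single lift starting at $\overline{\xi}$ (each has $d_X<r\le d_X(x_1,x_2)$, so you only pay a factor $2$ in $c$); or (ii) simply prove the statement with $\widetilde{B}_{3r}$ on the right-hand side, which is all that is ever needed in the applications. With either repair your proof is complete.
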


The main tool to prove a priori estimates (in \cite{rs}, \cite{BB2000},
\cite{BB2007}, \cite{BZ2}) is the combination of some abstract theory of
singular integrals with some representation formulas for the second order
derivatives $\widetilde{X}_{i}\widetilde{X}_{j}u$ of any test function by
means of suitable integral operators. The reason why this is performed in the
spaces of lifted variables is that this allows to make use of singular
integral operators with better properties. The key notion here is that of
\emph{frozen operator of type zero over a ball} $\widetilde{B}_{R}\left(
\xi_{0}\right)  $, first introduced in \cite{BB2000} adapting the notion of
operator of type zero given in \cite{rs}. We will not recall the definition of
this concept (see \cite[Definition 6.3]{BB2007}) because it involves several
other notions that we will not use explicitly. It is enough to say that a
frozen operator of type zero over $\widetilde{B}_{R}\left(  \overline{\xi
}\right)  $ is an integral operator $T\left(  \xi_{0}\right)  $ (depending on
some point $\xi_{0}\in\widetilde{B}_{R}\left(  \overline{\xi}\right)  $ like a
parameter), and that the following two results hold:

\begin{theorem}
\label{Thm type zero continuity}(see \cite[Thm.6.6]{BB2007}, see also
\cite[Thm.5.1]{BZ2}). There exists $C_{R}>0$ depending on $R,\Omega$ and the
vector fields $X_{i}$ (but not on $\xi_{0}$) such that for every $r\leq R,f\in
C_{X,0}^{\alpha}\left(  \widetilde{B}_{r}\left(  \overline{\xi}\right)
\right)  ,$%
\[
\left\Vert T\left(  \xi_{0}\right)  f\right\Vert _{C_{X}^{\alpha}\left(
\widetilde{B}_{r}\left(  \overline{\xi}\right)  \right)  }\leq C_{R}\left\Vert
f\right\Vert _{C_{X}^{\alpha}\left(  \widetilde{B}_{r}\left(  \overline{\xi
}\right)  \right)  }.
\]

\end{theorem}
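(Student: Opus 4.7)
The plan is to follow the standard Calderón--Zygmund-style argument for singular integrals adapted to the quasi-metric space $(\widetilde{U}, d_{\widetilde{X}})$, exploiting the structural properties of frozen operators of type zero established in \cite{BB2000, BB2007}. Since the operator $T(\xi_0)$ is built from a kernel $K(\xi_0;\cdot,\cdot)$ whose size, smoothness and (depending on the formulation) cancellation behaviour are controlled uniformly in $\xi_0 \in \widetilde{B}_R(\overline{\xi})$, the whole point is to convert these uniform kernel estimates into a uniform operator bound on $C_{\widetilde{X}}^{\alpha}$.

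First I would handle the $L^\infty$-bound: using that a type-zero kernel is locally integrable uniformly in the freezing point (the singularity is of order strictly less than the homogeneous dimension), one gets $\|T(\xi_0)f\|_{L^\infty} \leq C_R\|f\|_{L^\infty}$ by a direct integration estimate, with $C_R$ independent of $\xi_0$. For the H\"older seminorm, I would fix $\xi, \eta \in \widetilde{B}_r(\overline{\xi})$, set $\rho = d_{\widetilde{X}}(\xi,\eta)$, and split
\[
T(\xi_0)f(\xi) - T(\xi_0)f(\eta) = I_{\text{near}} + I_{\text{far}},
\]
where $I_{\text{near}}$ is the integral over $\widetilde{B}_{2\rho}(\xi)$ and $I_{\text{far}}$ over its complement (intersected with the support of $f$). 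In $I_{\text{near}}$ I would replace $f(\zeta)$ with $f(\zeta)-f(\xi)$ (using the cancellation property available for type-zero operators, or simply the integrability of the kernel and the $L^\infty$ bound on the remainder), then exploit $|f(\zeta)-f(\xi)| \le |f|_{C_{\widetilde{X}}^\alpha} d_{\widetilde{X}}(\zeta,\xi)^\alpha$ against the size estimate of the kernel to get a bound of order $|f|_{C_{\widetilde{X}}^\alpha}\rho^\alpha$. In $I_{\text{far}}$ I would use the mean-value-type difference bound for the kernel (which is where the smoothness estimate in the $\widetilde{X}$-variables comes in) to control $|K(\xi_0;\xi,\zeta)-K(\xi_0;\eta,\zeta)|$ by $\rho/d_{\widetilde{X}}(\xi,\zeta)^{Q+1}$ times a constant uniform in $\xi_0$, and again integrate against $f$.

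The crucial technical input is Proposition \ref{prop extension} (to work with $C_{\widetilde{X},0}^\alpha$ extensions without boundary issues) combined with the doubling property of $d_{\widetilde{X}}$-balls in the lifted space, which is what makes the geometric splitting above quantitatively effective. The delicate and main step is the \emph{far} estimate: one needs the smoothness of the kernel in the $\widetilde{X}$-variables and the doubling/integral-decay of $d_{\widetilde{X}}^{-Q-1}$ over annuli to yield a bound of order $\rho^\alpha$ after summing a geometric series in scales $2^k\rho$, with constants depending only on $R$, $\Omega$ and the $X_i$'s. The uniformity in $\xi_0$ is inherited from the definition of frozen operator of type zero, which prescribes that all kernel estimates hold uniformly as $\xi_0$ varies over $\widetilde{B}_R(\overline{\xi})$; hence the constant $C_R$ absorbs these structural constants and is independent of $\xi_0$. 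Since the argument reproduces that of \cite[Thm.~6.6]{BB2007} (see also \cite[Thm.~5.1]{BZ2}), a full proof would amount to checking that each step in the Calder\'on--Zygmund H\"older estimate goes through with constants explicitly independent of the freezing point.
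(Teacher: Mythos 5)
First, a point of orientation: the paper does not prove this theorem at all. It is imported verbatim from \cite[Thm.~6.6]{BB2007} and \cite[Thm.~5.1]{BZ2}, and the authors deliberately refrain from even recalling the definition of a frozen operator of type zero. So there is no in-paper proof to compare your argument with; what can be assessed is whether your sketch would actually reproduce the cited result.

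Your outline follows the expected general route (a Calder\'on--Zygmund near/far splitting on the lifted space of homogeneous type, with uniformity in $\xi_{0}$ inherited from the uniform kernel estimates built into the definition of frozen operator), but it contains one genuine error. You claim that the bound $\left\Vert T\left(\xi_{0}\right)f\right\Vert_{L^{\infty}}\leq C_{R}\left\Vert f\right\Vert_{L^{\infty}}$ follows \emph{by a direct integration estimate} because ``the singularity is of order strictly less than the homogeneous dimension.'' That is false for operators of type \emph{zero}: in the Rothschild--Stein and Bramanti--Brandolini calculus, type-zero kernels are modelled on functions homogeneous of degree exactly $-Q$ (with $Q$ the homogeneous dimension), i.e.\ they are genuinely singular and not locally integrable near the diagonal; it is the operators of type $\ell\geq1$ whose kernels are locally integrable. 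Consequently $T\left(\xi_{0}\right)$ does not map $L^{\infty}$ to $L^{\infty}$ by size estimates alone, and the sup-norm bound must itself exploit the H\"{o}lder continuity and compact support of $f$: one writes
\[
T\left(\xi_{0}\right)f\left(\xi\right)=\int\left(f\left(\eta\right)-f\left(\xi\right)\right)k\left(\xi_{0};\xi,\eta\right)d\eta+f\left(\xi\right)\int k\left(\xi_{0};\xi,\eta\right)d\eta,
\]
controls the first term by $\left\vert f\right\vert_{C^{\alpha}}$ against the now locally integrable majorant $d_{\widetilde{X}}\left(\xi,\eta\right)^{\alpha-Q}$, and the second by the cancellation property of the kernel. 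This is precisely why the theorem is stated with the full $C_{X}^{\alpha}$ norm, rather than the sup norm, on the right-hand side. The remainder of your sketch (the near/far decomposition for the H\"{o}lder seminorm, the geometric summation over annuli, the uniformity in $\xi_{0}$) is correct in spirit, but as written it is a programme rather than a proof: none of the size, smoothness, or cancellation estimates you invoke are verified, and establishing them is exactly the content of the cited references.
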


\begin{theorem}
\label{thm type 0 exchange}For any $k=1,2,...,q$ there exist $q+1$ frozen
operators of type zero over $\widetilde{B}_{R}\left(  \overline{\xi}\right)
$, $T_{h}^{k}\left(  \xi_{0}\right)  $ for $k=0,1,2,...,q,$ such that for any
$f\in C_{X}^{1}\left(  \widetilde{B}_{r}\left(  \overline{\xi}\right)
\right)  $ one has:%
\[
\widetilde{X}_{k}T\left(  \xi_{0}\right)  f=\sum_{h=1}^{q}T_{h}^{k}\left(
\xi_{0}\right)  \widetilde{X}_{h}f+T_{k}^{0}\left(  \xi_{0}\right)  f
\]

\end{theorem}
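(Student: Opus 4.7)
The plan is to prove the identity at the level of the integral kernels that define $T(\xi_0)$. By the construction in \cite{rs, BB2000, BB2007}, a frozen operator of type zero over $\widetilde{B}_R(\overline{\xi})$ has a representation
\[
T(\xi_0) f(\xi) = \int k(\xi_0;\xi,\eta)\, f(\eta)\, d\eta,
\]
where $k(\xi_0;\cdot,\cdot)$ is the product of a cutoff with the pullback, through a Rothschild--Stein local diffeomorphism, of a kernel on the approximating free nilpotent group $G$ that is homogeneous of critical degree. Differentiating under the integral sign gives
\[
\widetilde{X}_k T(\xi_0) f(\xi) = \int \widetilde{X}_k^{(\xi)} k(\xi_0;\xi,\eta)\, f(\eta)\, d\eta,
\]
so the whole task reduces to identifying $\widetilde{X}_k^{(\xi)} k$ with a combination of type zero kernels.

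The key step is to transfer the derivative from the $\xi$ variable to the $\eta$ variable. In the Rothschild--Stein framework the lifted field $\widetilde{X}_k$ is locally a perturbation, by a vector field of strictly lower weight, of a left-invariant field $Y_k$ on $G$; moreover left- and right-invariant vector fields on $G$ differ by terms of strictly lower weight. Combining these two facts at the level of the kernel yields an expansion
\[
\widetilde{X}_k^{(\xi)} k(\xi_0;\xi,\eta) = -\sum_{h=1}^{q} a_{hk}(\xi,\eta)\, \widetilde{X}_h^{(\eta)} k(\xi_0;\xi,\eta) + r(\xi_0;\xi,\eta),
\]
where $a_{hk}$ are smooth bounded functions and $r(\xi_0;\cdot,\cdot)$ is itself a type zero kernel, the lower weight of the perturbation compensating the loss of one homogeneous degree caused by $\widetilde{X}_k^{(\xi)}$.

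Integrating by parts in $\eta$, which is legitimate because the cutoff in $k$ kills the boundary contributions, moves $\widetilde{X}_h$ onto $f$ and produces
\[
\widetilde{X}_k T(\xi_0) f(\xi) = \sum_{h=1}^{q} \int a_{hk}(\xi,\eta)\, k(\xi_0;\xi,\eta)\, \widetilde{X}_h f(\eta)\, d\eta + \int \widetilde{r}(\xi_0;\xi,\eta)\, f(\eta)\, d\eta,
\]
where $\widetilde{r}$ absorbs the remainder $r$, the divergence term produced by the integration by parts, and the terms obtained by differentiating $a_{hk}$ in $\eta$. Defining $T_h^k(\xi_0)$ to be the operator with kernel $a_{hk}(\xi,\eta)\, k(\xi_0;\xi,\eta)$, and $T_k^0(\xi_0)$ the operator with kernel $\widetilde{r}$, one obtains the asserted exchange identity.

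The main obstacle, which I would not expand in detail, is verifying that \emph{each} of the kernels produced above genuinely satisfies the precise definition of a frozen operator of type zero given in \cite[Definition 6.3]{BB2007}: namely the kernel $a_{hk}\, k$ after the transfer, the remainder $r$, the terms arising from $\widetilde{X}_h^{(\eta)} a_{hk}$, and the divergence contribution. This requires the full bookkeeping of size, cancellation, and homogeneity estimates for type zero kernels, together with the stability of this class under multiplication by smooth bounded functions and under differentiation by lifted vector fields of the appropriate weight; these are precisely the technical properties established in \cite{rs} and transferred to the nonvariational setting in \cite{BB2000, BB2007}.
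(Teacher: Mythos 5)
Your overall mechanism (transfer the derivative from $\xi$ to $\eta$ via the approximation by invariant fields on the free group, integrate by parts, collect remainders of improved homogeneity) is indeed the engine behind results of this kind, but you are running it at the wrong level, and the very first step fails for the operators in the statement. A frozen operator of \emph{type zero} is a singular integral: its kernel is homogeneous of the critical degree $-Q$ ($Q$ the homogeneous dimension of the free group), so $T\left(\xi_{0}\right)f$ is not an absolutely convergent integral (it is a principal value, possibly plus a multiplication term $\alpha\left(\xi\right)f\left(\xi\right)$), and $\widetilde{X}_{k}^{(\xi)}k\left(\xi_{0};\xi,\eta\right)$ has homogeneity $-Q-1$, which is not locally integrable and cannot be paired with a merely $C^{1}$ function even in the principal value sense without a separate cancellation argument. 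Consequently the identity $\widetilde{X}_{k}T\left(\xi_{0}\right)f=\int\widetilde{X}_{k}^{(\xi)}k\,f\,d\eta$, on which everything downstream rests, is not valid as written; the correct differentiation formula for such kernels carries an extra local term proportional to $f\left(\xi\right)$ coming from the boundary of the excised diagonal neighborhood, and the subsequent integration by parts in $\eta$ against a singular kernel requires the same excision and boundary bookkeeping. None of this is covered by the ``main obstacle'' you flag at the end, which only concerns verifying that the \emph{final} kernels are of type zero.

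The paper avoids all of this with a two-line reduction: the exchange formula is already known for frozen operators of \emph{type one} (\cite[Prop.~6.9]{BB2007}), whose kernels have homogeneity $1-Q$ and are locally integrable, so that differentiating under the integral and integrating by parts are legitimate there; moreover every type zero operator occurring in the argument is of the form $\widetilde{X}_{i}S\left(\xi_{0}\right)$ with $S\left(\xi_{0}\right)$ of type one, and conversely $\widetilde{X}_{k}$ applied to a type one operator is of type zero. Writing $\widetilde{X}_{k}T\left(\xi_{0}\right)f=\widetilde{X}_{k}\widetilde{X}_{i}S\left(\xi_{0}\right)f$, applying the type one exchange formula to $\widetilde{X}_{i}S\left(\xi_{0}\right)f$, and then letting the outer $\widetilde{X}_{k}$ fall on the resulting type one operators gives the assertion. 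If you want to keep a kernel-level proof, run your computation on the type one kernel of $S\left(\xi_{0}\right)$ (where it is essentially the proof of \cite[Prop.~6.9]{BB2007}) and then differentiate once more, rather than attempting it directly on the singular type zero kernel.
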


\begin{proof}
[Proof of Theorem \ref{thm type 0 exchange}]In \cite[Prop.6.9]{BB2007} an
analogous formula is stated for $T\left(  \xi_{0}\right)  ,T_{h}^{k}\left(
\xi_{0}\right)  $ frozen operators \emph{of type one}. Exploiting the fact
that any frozen operator of type zero involved in our argument is actually of
the kind $\widetilde{X}_{i}S\left(  \xi_{0}\right)  $ with $S\left(  \xi
_{0}\right)  $ frozen operator of type one, and \emph{viceversa }for every
frozen operator of type one $S\left(  \xi_{0}\right)  $ the derivative
$\widetilde{X}_{i}S\left(  \xi_{0}\right)  $ is a frozen operator of type
zero, we can write (denoting frozen operators of type zero or one with the
letters $T,S,$ respectively):%
\begin{align*}
\widetilde{X}_{k}T\left(  \xi_{0}\right)  f &  =\widetilde{X}_{k}\widetilde
{X}_{i}S\left(  \xi_{0}\right)  f=\widetilde{X}_{k}\left(  \sum_{h=1}^{q}%
S_{h}^{i}\left(  \xi_{0}\right)  \widetilde{X}_{h}f+S_{k}^{0}\left(  \xi
_{0}\right)  f\right)  \\
&  =\sum_{h=1}^{q}T_{h}^{k}\left(  \xi_{0}\right)  \widetilde{X}_{h}%
f+T_{k}^{0}\left(  \xi_{0}\right)  f,
\end{align*}
which gives the assertion.
\end{proof}

\section{The linear regularity theory\label{sec linear}}

Let us consider the linear operator%
\[
L=\sum_{i,j=1}^{q}a_{ij}\left(  x\right)  X_{i}X_{j}+\sum_{i=1}^{q}%
b_{i}\left(  x\right)  X_{i}+c\left(  x\right)
\]
where:

$X_{1},X_{2},...,X_{q}$ are a system of H\"{o}rmander's vector fields in a
neighborhood $\Omega_{0}$ of some bounded domain $\Omega\subset\mathbb{R}^{n}%
$, as described at the beginning of \S \ \ref{sec Hormander};

$a_{ij},b_{i},c\in C_{X}^{\alpha}\left(  \Omega\right)  $ for some $\alpha
\in\left(  0,1\right)  $, $a_{ij}=a_{ji}$ satisfying for some constant
$\Lambda>0$ the condition%
\begin{equation}
\Lambda|\xi|^{2}\leq\sum_{i,j=1}^{q}a_{ij}\left(  x\right)  \xi_{i}\xi_{j}%
\leq\Lambda^{-1}|\xi|^{2}\text{ }\forall\xi\in{\mathbb{R}}^{q},x\in\Omega.
\label{ellipticity}%
\end{equation}

The aim of this section is to prove the following result:

\begin{theorem}
\label{thm main linear}Under the above assumptions, let $u\in C_{X}^{2,\alpha
}\left(  \Omega\right)  $ satisfy the equation%
\[
Lu=f\text{ in }\Omega
\]
and assume that for some integer $k=1,2,3,...$we have:%
\[
a_{ij},b_{i},c,f,\in C_{X}^{k,\alpha}\left(  \Omega\right)  .
\]
Then%
\[
u\in C_{X,loc}^{k+2,\alpha}\left(  \Omega\right)  .
\]
In particular, if
\[
a_{ij},b_{i},c,f,\in C^{\infty}\left(  \Omega\right)  \text{ }%
\]
then
\[
u\in C^{\infty}\left(  \Omega\right)  .
\]

\end{theorem}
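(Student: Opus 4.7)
My approach would be induction on the regularity index. Starting from the hypothesis $u \in C_X^{2,\alpha}(\Omega)$, I would show that whenever $u \in C_{X,loc}^{m+1,\alpha}(\Omega)$ for some $m \geq 1$ (with $a_{ij},b_i,c,f \in C_X^{m,\alpha}$), then in fact $u \in C_{X,loc}^{m+2,\alpha}(\Omega)$; iterating from $m=1$ up to $m=k$ gives the conclusion. Since regularity is a local matter, I would fix $\overline{x}\in\Omega$ and a small ball $B_r(\overline{x})\subset\Omega$, choose a cut-off $\phi$ equal to $1$ on $B_{r/2}(\overline{x})$ and supported in $B_r(\overline{x})$, and set $w = \phi u$. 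Then $Lw = \phi f + [L,\phi] u =: g$, and by the inductive hypothesis $g\in C_X^{m,\alpha}(B_r)$, the crucial point being that $[L,\phi] u$ involves only $u$ and its first-order $X$-derivatives multiplied by smooth derivatives of $\phi$.

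Working in lifted variables on $\widetilde{B}_r(\overline{\xi})$ with $\overline{\xi}=(\overline{x},0)$, I would invoke the representation formula from \cite{BB2007},
\[
V_{hk} := \widetilde{X}_h \widetilde{X}_k \widetilde{w} = T_{hk}(\xi_0)\,\widetilde{L}(\xi_0)\widetilde{w},
\]
together with analogous identities $\widetilde{X}_k\widetilde{w} = S_k(\xi_0)\widetilde{L}(\xi_0)\widetilde{w}$ and $\widetilde{w} = P(\xi_0)\widetilde{L}(\xi_0)\widetilde{w}$ through frozen operators of type one and two. Writing
\[
\widetilde{L}(\xi_0)\widetilde{w} = \widetilde{g} + \sum_{i,j}\bigl(a_{ij}(\xi_0)-a_{ij}(\xi)\bigr)V_{ij} - \sum_i b_i\widetilde{X}_i\widetilde{w} - c\,\widetilde{w}
\]
and substituting, one obtains a closed fixed-point equation $V = \Phi(V)$ for the tuple $V=(V_{hk})$ in the Banach space $\bigl(C_{\widetilde{X},0}^{m,\alpha}(\widetilde{B}_r(\overline{\xi}))\bigr)^{q^2}$, with $\xi_0 = \overline{\xi}$.

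I would then apply the Banach--Caccioppoli theorem twice. Continuity of $\Phi$ on $C_{\widetilde{X}}^{m,\alpha}$ follows from Theorem \ref{Thm type zero continuity} combined with iterated application of the commutator formula of Theorem \ref{thm type 0 exchange}, which upgrades the $C^\alpha$-boundedness of $T_{hk}(\xi_0)$ to $C_{\widetilde{X}}^{m,\alpha}$-boundedness with constants uniform in $\xi_0$. Contractivity for small $r$ comes from the smallness $\|a_{ij}(\xi_0)-a_{ij}(\cdot)\|_{L^\infty(\widetilde{B}_r)} \leq Cr^\alpha$ together with the Poincar\'e-type estimate $\|V\|_{L^\infty} \leq c\,r^\alpha |V|_{C^\alpha}$ valid for compactly supported $V$ on $\widetilde{B}_r$ (an easy consequence of the definition of $d_{\widetilde{X}}$ together with Proposition \ref{prop extension}): combined, these force the multiplication $V \mapsto (a_{ij}(\xi_0)-a_{ij}(\cdot))V$ to be small in operator norm on the relevant subspace. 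Banach--Caccioppoli then yields a unique fixed point $V^\ast \in (C_{\widetilde{X}}^{m,\alpha})^{q^2}$. Crucially, the true tuple $V$ solves the same integral equation in the larger space $(C_{\widetilde{X}}^{\alpha})^{q^2}$, where the analogous contraction (exactly the a priori estimate of \cite{BB2007} in disguise) forces uniqueness; hence $V^\ast = V$, so $V_{hk} \in C_{\widetilde{X}}^{m,\alpha}$. De-lifting via Proposition \ref{Proposition de-lifting} gives $X_iX_j u \in C_X^{m,\alpha}(B_{r/2}(\overline{x}))$, and thus $u \in C_X^{m+2,\alpha}(B_{r/2}(\overline{x}))$, closing the induction.

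The main obstacle I anticipate is establishing the $C_{\widetilde{X}}^{m,\alpha}$ boundedness estimates for the auxiliary frozen operators $S_k(\xi_0)$, $P(\xi_0)$ and for the compositions (such as $\widetilde{X}_h\widetilde{X}_k\circ S_i(\xi_0)$) that emerge after substitution: iterating the commutator identity of Theorem \ref{thm type 0 exchange} $m$ times spawns many lower-order type-zero pieces that must be carefully compiled and bounded, with constants uniform in $\xi_0$. A secondary, more bookkeeping obstacle is ensuring that $\widetilde{g}$ and all intermediate quantities genuinely lie in the compactly supported subspace $C_{\widetilde{X},0}^{m,\alpha}$ on which the Poincar\'e-type $L^\infty$--Hölder trade-off is available; this likely requires nested cut-offs together with repeated use of the extension device in Proposition \ref{prop extension}.
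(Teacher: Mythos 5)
Your proposal follows essentially the same route as the paper: localize and lift, use the Rothschild--Stein representation formula to write the tuple of second derivatives as a fixed point of an integral operator, obtain contractivity for small $r$ from the $C^\alpha$-smallness of $\widetilde{a}_{ij}(\xi_0)-\widetilde{a}_{ij}(\cdot)$ together with the commutator formula of Theorem \ref{thm type 0 exchange} and the $L^\infty$--H\"older trade-off on small balls, apply Banach--Caccioppoli in both $\bigl(C_{X,\ast}^{\alpha}\bigr)^{q\times q}$ and $\bigl(C_{X,\ast}^{1,\alpha}\bigr)^{q\times q}$, and identify the true second derivatives with the more regular fixed point by uniqueness before de-lifting. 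The only cosmetic difference is that you carry the lower-order terms $b_i$, $c$ inside the fixed-point map, whereas the paper first disposes of them by Remark \ref{remark principal part}.
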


In virtue of the results in \cite{BB2007}, the regularity $u\in C_{X,loc}%
^{k+2,\alpha}\left(  \Omega\right)  $ also implies the validity of local a
priori estimates
\[
\left\Vert u\right\Vert _{C_{X}^{k+2,\alpha}\left(  \Omega^{\prime}\right)
}\leqslant c\left\{  \left\Vert Lu\right\Vert _{C_{X}^{k,\alpha}\left(
\Omega\right)  }+\left\Vert u\right\Vert _{L^{\infty}\left(  \Omega\right)
}\right\}
\]
for any $\Omega^{\prime}\Subset\Omega$, with constant $c$ independent of $u$.

\begin{remark}
\label{remark principal part}Clearly, it is enough to prove the theorem for
$b_{i}=c=0$, because assuming this we can proceed as follows: let $u\in
C_{X}^{2,\alpha}\left(  \Omega\right)  $ satisfy the equation%
\[
Lu=f\text{ in }\Omega
\]
and assume that%
\[
a_{ij},b_{i},c,f,\in C_{X}^{1,\alpha}\left(  \Omega\right)  .
\]
Then%
\[
\sum_{i,j=1}^{q}a_{ij}\left(  x\right)  X_{i}X_{j}u=f-\sum_{i=1}^{q}%
b_{i}\left(  x\right)  X_{i}u-c\left(  x\right)  u\equiv\widetilde{f}\in
C_{X}^{1,\alpha}\left(  \Omega\right)
\]
and by the result that we suppose already proved for the principal part
operator we conclude $u\in C_{X,loc}^{3,\alpha}\left(  \Omega\right)  $.
Iterating this argument gives the general result for any $k$. Hence, we need
to prove Theorem \ref{thm main linear} only for $b_{i}=c=0.$
\end{remark}

Now, fix $x_{0}\in\Omega$ and a small ball $B_{R}\left(  x_{0}\right)
\subset\Omega$ where the lifting procedure is applicable. Let $\xi_{0}=\left(
x_{0},0\right)  ,\xi=\left(  x,h\right)  $, and define, for $\xi\in
\widetilde{B}_{R}\left(  \xi_{0}\right)  ,$%
\begin{align*}
\widetilde{a}_{ij}\left(  \xi\right)   &  =a_{ij}\left(  x\right) \\
\widetilde{L}u\left(  \xi\right)   &  =\sum_{i,j=1}^{q}\widetilde{a}%
_{ij}\left(  \xi\right)  \widetilde{X}_{i}\widetilde{X}_{j}u\left(
\xi\right)  .
\end{align*}
Next, let us freeze the coefficients $\widetilde{a}_{ij}$ at $\xi_{0},$ and
let%
\[
\widetilde{L}_{0}u\left(  \xi\right)  =\sum_{i,j=1}^{q}\widetilde{a}%
_{ij}\left(  \xi_{0}\right)  \widetilde{X}_{i}\widetilde{X}_{j}u\left(
\xi\right)
\]

For this frozen lifted operator the following representation formula holds true:

\begin{theorem}
[Representation of $\widetilde{X}_{m}\widetilde{X}_{l}u$ by frozen
operators]\label{Thm rep form C^alfa}(\cite[p.211]{BB2007}) Given $a\in
C_{0}^{\infty}\left(  \widetilde{B}_{R}\left(  \xi_{0}\right)  \right)  $,
there exist frozen operators $T_{lm}\left(  \xi_{0}\right)  $ over the ball
$\widetilde{B}_{R}\left(  \xi_{0}\right)  $ ($m,l=1,2,...,q$), such that for
any $u\in C_{X,0}^{2,\alpha}\left(  \widetilde{B}_{R}\left(  \xi_{0}\right)
\right)  $%
\begin{equation}
\widetilde{X}_{m}\widetilde{X}_{l}\left(  au\right)  =T_{lm}\left(  \xi
_{0}\right)  \widetilde{\mathcal{L}}_{0}u+\sum_{i,j=1}^{q}\widetilde{a}%
_{ij}\left(  \xi_{0}\right)  \left\{  \sum_{k=1}^{q}T_{lm,k}^{ij}\left(
\xi_{0}\right)  \widetilde{X}_{k}u+T_{lm}^{ij}\left(  \xi_{0}\right)
u\right\}  .\nonumber
\end{equation}
Also,
\begin{align*}
&  \widetilde{X}_{m}\widetilde{X}_{l}\left(  au\right)  =T_{lm}\left(  \xi
_{0}\right)  \widetilde{\mathcal{L}}u+T_{lm}\left(  \xi_{0}\right)  \left(
\sum_{i,j=1}^{q}\left[  \widetilde{a}_{ij}(\xi_{0})-\widetilde{a}_{ij}%
(\cdot)\right]  \,\widetilde{X}_{i}\widetilde{X}_{j}u\right)  +\\
&  +\sum_{i,j=1}^{q}\widetilde{a}_{ij}\left(  \xi_{0}\right)  \left\{
\sum_{k=1}^{q}T_{lm,k}^{ij}\left(  \xi_{0}\right)  \widetilde{X}_{k}%
u+T_{lm}^{ij}\left(  \xi_{0}\right)  u\right\}  .
\end{align*}

\end{theorem}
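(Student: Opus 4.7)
The plan is to derive both formulas from the Rothschild--Stein parametrix associated with the frozen operator $\widetilde{\mathcal{L}}_0$, together with the commutation calculus of frozen operators of type $j$ ($j=0,1,2$) developed in \cite{rs}, \cite{BB2000}, \cite{BB2007}. Since the symmetric matrix $\{\widetilde{a}_{ij}(\xi_0)\}$ is positive definite, $\widetilde{\mathcal{L}}_0$ falls (after the usual symmetrization on the Rothschild--Stein free nilpotent group) into the class of lifted frozen H\"ormander operators, hence admits a local parametrix $P(\xi_0)$ which is a frozen operator of type two on $\widetilde{B}_R(\xi_0)$ satisfying $v = P(\xi_0)\widetilde{\mathcal{L}}_0 v + \mathcal{R}(\xi_0) v$ for every $v$ compactly supported in the ball, with $\mathcal{R}(\xi_0)$ a smoothing remainder that can be recast as a frozen operator of strictly positive type. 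Applying this identity to $v = au$, which is compactly supported in $\widetilde{B}_R(\xi_0)$ because of the cutoff $a$, yields $au = P(\xi_0)\widetilde{\mathcal{L}}_0(au) + \mathcal{R}(\xi_0)(au)$.

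Next I would apply $\widetilde{X}_m\widetilde{X}_l$ to this identity. By the standard differentiation properties of type-$j$ operators (one derivative of a type-two operator is type one; a second derivative, handled via Theorem \ref{thm type 0 exchange}, drops the type to zero up to further type-zero terms coming from the exchange commutators), $\widetilde{X}_m\widetilde{X}_l P(\xi_0)$ is a frozen operator of type zero, say $\widehat T_{lm}(\xi_0)$, and $\widetilde{X}_m\widetilde{X}_l \mathcal{R}(\xi_0)(au)$ can be rewritten as a finite sum of frozen type-zero operators acting on $\widetilde{X}_k u$ and on $u$. Expanding by Leibniz,
\[
\widetilde{\mathcal{L}}_0(au) = a\,\widetilde{\mathcal{L}}_0 u + \sum_{i,j=1}^q \widetilde{a}_{ij}(\xi_0)\bigl\{(\widetilde{X}_i a)\widetilde{X}_j u + (\widetilde{X}_j a)\widetilde{X}_i u + (\widetilde{X}_i\widetilde{X}_j a)\,u\bigr\},
\]
and composing $\widehat T_{lm}(\xi_0)$ with multiplication by the smooth cutoff $a$ and its $\widetilde{X}$-derivatives (all compactly supported in $\widetilde{B}_R(\xi_0)$) again produces frozen operators of type zero. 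Collecting terms defines the operators $T_{lm}(\xi_0)$, $T_{lm,k}^{ij}(\xi_0)$, $T_{lm}^{ij}(\xi_0)$ and gives the first formula.

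The second formula then follows by substituting
\[
\widetilde{\mathcal{L}}_0 u = \widetilde{\mathcal{L}} u + \sum_{i,j=1}^q \bigl[\widetilde{a}_{ij}(\xi_0) - \widetilde{a}_{ij}(\cdot)\bigr]\widetilde{X}_i\widetilde{X}_j u
\]
into the first formula and using linearity of $T_{lm}(\xi_0)$: the pointwise defect $\widetilde{a}_{ij}(\xi_0)-\widetilde{a}_{ij}(\cdot)$ is transferred inside $T_{lm}(\xi_0)$ and produces the extra term in the statement.

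\emph{Main obstacle.} No deep new idea is needed; the real work is the bookkeeping inside the Rothschild--Stein calculus. One must check at each step (differentiation of type-$j$ operators, composition with smooth multipliers, absorption of the parametrix remainder) that the resulting operator is indeed a frozen operator of type zero over $\widetilde{B}_R(\xi_0)$ and that its dependence on the parameter $\xi_0$ is smooth in the sense required by that definition. These verifications, although tedious, are standard applications of the tools already established in \cite{BB2000}, \cite{BB2007}, and of Theorem \ref{thm type 0 exchange} above.
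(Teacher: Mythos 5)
The paper does not actually prove this theorem: it is quoted from \cite[p.211]{BB2007}, where it is established via the Rothschild--Stein parametrix of the frozen lifted operator, so there is no in-paper proof to compare against. Your sketch reproduces exactly that standard argument --- the parametrix identity for $\widetilde{\mathcal{L}}_0$ applied to the compactly supported function $au$, two $\widetilde{X}$-differentiations dropping the operator type from two to zero, the Leibniz expansion of $\widetilde{\mathcal{L}}_0(au)$ producing the $\widetilde{X}_k u$ and $u$ terms weighted by $\widetilde{a}_{ij}(\xi_0)$, and the second formula via $\widetilde{\mathcal{L}}_0 u = \widetilde{\mathcal{L}}u + \sum_{i,j}\bigl[\widetilde{a}_{ij}(\xi_0)-\widetilde{a}_{ij}(\cdot)\bigr]\widetilde{X}_i\widetilde{X}_j u$ --- and is correct as an outline, with the caveat that the deferred type-$j$ bookkeeping is precisely the substance of the proof in the cited reference.
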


In order to make more readable the previous formulas, let us define:%
\begin{align*}
T_{lm,k}^{A}\left(  \xi_{0}\right)   &  =\sum_{i,j=1}^{q}\widetilde{a}%
_{ij}\left(  \xi_{0}\right)  T_{lm,k}^{ij}\left(  \xi_{0}\right) \\
T_{lm}^{A}\left(  \xi_{0}\right)   &  =\sum_{i,j=1}^{q}\widetilde{a}%
_{ij}\left(  \xi_{0}\right)  T_{lm}^{ij}\left(  \xi_{0}\right)
\end{align*}
hence%
\begin{align}
\widetilde{X}_{m}\widetilde{X}_{l}\left(  au\right)   &  =T_{lm}\left(
\xi_{0}\right)  \widetilde{\mathcal{L}}u+T_{lm}\left(  \xi_{0}\right)  \left(
\sum_{i,j=1}^{q}\left[  \widetilde{a}_{ij}(\xi_{0})-\widetilde{a}_{ij}%
(\cdot)\right]  \,\widetilde{X}_{i}\widetilde{X}_{j}u\right)  \label{id basic}%
\\
&  +\sum_{k=1}^{q}T_{lm,k}^{A}\left(  \xi_{0}\right)  \widetilde{X}%
_{k}u+T_{lm}^{A}\left(  \xi_{0}\right)  u.\nonumber
\end{align}

\begin{remark}
We note that by Theorem \ref{Thm type zero continuity}, the operators
$T_{lm,k}^{A}\left(  \xi_{0}\right)  ,T_{lm}^{A}\left(  \xi_{0}\right)  $
satisfy the estimate:
\begin{equation}
\left\Vert T_{...}^{A}\left(  \xi_{0}\right)  f\right\Vert _{C^{\alpha}\left(
\widetilde{B}_{r}\left(  \xi_{0}\right)  \right)  }\leq C_{R,\Lambda
}\left\Vert f\right\Vert _{C^{\alpha}\left(  \widetilde{B}_{r}\left(  \xi
_{0}\right)  \right)  } \label{continuity}%
\end{equation}
for any $f\in C_{X,0}^{\alpha}\left(  \widetilde{B}_{r}\left(  \xi_{0}\right)
\right)  $, where now the constant $C_{R,\Lambda}$ also depends on the number
$\Lambda$ in (\ref{ellipticity}).
\end{remark}

We are going to see (\ref{id basic}) as an identity involving a suitable
integral operator, to which apply the Banach-Caccioppoli fixed point theorem.
To this aim, for a fixed $v\in C_{X,0}^{2,\alpha}\left(  \widetilde{B}\left(
\xi_{0},R\right)  \right)  $ let%
\begin{equation}
G_{l,m}=T_{lm}\left(  \xi_{0}\right)  \widetilde{\mathcal{L}}v+\sum_{k=1}%
^{q}T_{lm,k}^{A}\left(  \xi_{0}\right)  \widetilde{X}_{k}v+T_{lm}^{A}\left(
\xi_{0}\right)  v \label{G_l,m}%
\end{equation}
hence by (\ref{continuity})%
\[
G_{l,m}\in C_{X}^{\alpha}\left(  \widetilde{B}\left(  \xi_{0},R\right)
\right)
\]
and%
\[
\widetilde{X}_{m}\widetilde{X}_{l}\left(  av\right)  =G_{l,m}+T_{lm}\left(
\xi_{0}\right)  \left(  \sum_{i,j=1}^{q}\left[  \widetilde{a}_{ij}(\xi
_{0})-\widetilde{a}_{ij}(\cdot)\right]  \,\widetilde{X}_{i}\widetilde{X}%
_{j}v\right)  .
\]
Now, for a number $r<R$ to be fixed later, pick another $\beta\in
C_{0}^{\infty}\left(  \widetilde{B}_{r}\left(  \xi_{0}\right)  \right)  $ such
that $\beta=1$ in $\widetilde{B}_{r/2}\left(  \xi_{0}\right)  $, and write%
\begin{equation}
\beta\widetilde{X}_{m}\widetilde{X}_{l}\left(  av\right)  =\beta G_{l,m}+\beta
T_{lm}\left(  \xi_{0}\right)  \left(  \sum_{i,j=1}^{q}\left[  \widetilde
{a}_{ij}(\xi_{0})-\widetilde{a}_{ij}(\cdot)\right]  \,\widetilde{X}%
_{i}\widetilde{X}_{j}v\right)  . \label{rep formula}%
\end{equation}
Now, for any $F=\left(  F_{ij}\right)  _{i,j=1}^{q}\in\left(  C_{X,\ast
}^{\alpha}\left(  \widetilde{B}_{r}\left(  \xi_{0}\right)  \right)  \right)
^{q\times q},$ let us define the operator
\[
\mathcal{T}\left(  F\right)  =\beta G_{l,m}+\beta T_{lm}\left(  \xi
_{0}\right)  \left(  \sum_{i,j=1}^{q}\left[  \widetilde{a}_{ij}(\xi
_{0})-\widetilde{a}_{ij}(\cdot)\right]  \,F_{ij}\right)  .
\]

\begin{theorem}
\label{thm contraction 0}For $r>0$ small enough, the operator $\mathcal{T}$ is
a contraction of $\left(  C_{X,\ast}^{\alpha}\left(  B_{r}\left(  \xi
_{0}\right)  \right)  \right)  ^{q\times q}$ in itself.
\end{theorem}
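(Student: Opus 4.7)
The plan is to verify two things: that $\mathcal{T}$ maps $\big(C_{\widetilde{X},\ast}^{\alpha}(\widetilde{B}_{r}(\xi_{0}))\big)^{q\times q}$ into itself, and that its Lipschitz constant is bounded by a quantity of the form $C r^{\alpha}$ (with $C$ depending on $R,\Lambda,\|\widetilde{a}_{ij}\|_{C_{\widetilde{X}}^{\alpha}},\beta$), which becomes less than $1$ by shrinking $r$. The inhomogeneous term $\beta G_{l,m}$ does not depend on $F$, so it enters only in the well-posedness check.

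For well-posedness, $\beta G_{l,m}\in C_{\widetilde{X},0}^{\alpha}(\widetilde{B}_{r}(\xi_{0}))$ because $\beta\in C_{0}^{\infty}(\widetilde{B}_{r}(\xi_{0}))$ while $G_{l,m}$ lies in $C_{\widetilde{X}}^{\alpha}$ by (\ref{continuity}). For the perturbation piece, the function $[\widetilde{a}_{ij}(\xi_{0})-\widetilde{a}_{ij}(\cdot)]F_{ij}$ is in $C_{\widetilde{X}}^{\alpha}$ by (\ref{1.2}) and vanishes on $\partial\widetilde{B}_{r}(\xi_{0})$ since $F_{ij}\in C_{\widetilde{X},\ast}^{\alpha}$ does; Proposition \ref{prop extension} extends it by zero to a function in $C_{\widetilde{X},0}^{\alpha}(\widetilde{B}_{r'}(\xi_{0}))$ for any $r'\in(r,R)$, so the frozen operator $T_{lm}(\xi_{0})$ applies by Theorem \ref{Thm type zero continuity}, and multiplication by $\beta$ confines the result to $C_{\widetilde{X},0}^{\alpha}(\widetilde{B}_{r}(\xi_{0}))\subset C_{\widetilde{X},\ast}^{\alpha}$.

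For the contraction, the difference collapses to
\[
\mathcal{T}(F)_{l,m}-\mathcal{T}(F')_{l,m}=\beta\, T_{lm}(\xi_{0})\Bigg(\sum_{i,j=1}^{q}[\widetilde{a}_{ij}(\xi_{0})-\widetilde{a}_{ij}(\cdot)](F_{ij}-F'_{ij})\Bigg),
\]
which I would estimate by chaining: (a) Hölder continuity of $\widetilde{a}_{ij}$ gives $\|\widetilde{a}_{ij}(\xi_{0})-\widetilde{a}_{ij}(\cdot)\|_{L^{\infty}(\widetilde{B}_{r}(\xi_{0}))}\leq r^{\alpha}|\widetilde{a}_{ij}|_{C_{\widetilde{X}}^{\alpha}}$; (b) since $H_{ij}:=F_{ij}-F'_{ij}$ vanishes on $\partial\widetilde{B}_{r}(\xi_{0})$, a triangle-inequality argument with $d_{\widetilde{X}}$ yields $\|H_{ij}\|_{L^{\infty}(\widetilde{B}_{r}(\xi_{0}))}\leq(2r)^{\alpha}|H_{ij}|_{C_{\widetilde{X}}^{\alpha}}$; (c) Proposition \ref{basic properties for holder norm}(ii) combined with (a) and (b) gives
\[
\big\|[\widetilde{a}_{ij}(\xi_{0})-\widetilde{a}_{ij}(\cdot)]H_{ij}\big\|_{C_{\widetilde{X}}^{\alpha}(\widetilde{B}_{r}(\xi_{0}))}\leq C r^{\alpha}\|\widetilde{a}_{ij}\|_{C_{\widetilde{X}}^{\alpha}}\|H_{ij}\|_{C_{\widetilde{X}}^{\alpha}(\widetilde{B}_{r}(\xi_{0}))};
\]
(d) extending by zero via Proposition \ref{prop extension}, applying (\ref{continuity}) for $T_{lm}(\xi_{0})$, and multiplying by $\beta$ yield
\[
\|\mathcal{T}(F)-\mathcal{T}(F')\|_{(C_{\widetilde{X}}^{\alpha})^{q\times q}}\leq C(R,\Lambda,\|\widetilde{a}_{ij}\|_{C^{\alpha}},\beta)\, r^{\alpha}\, \|F-F'\|_{(C_{\widetilde{X}}^{\alpha})^{q\times q}}.
\]
Choosing $r$ small enough that this constant is $<1$ finishes the argument.

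The principal technical subtlety is the interplay between $C_{\widetilde{X},\ast}^{\alpha}$ and $C_{\widetilde{X},0}^{\alpha}$: the fixed-point iteration naturally sits in the former (boundary vanishing is precisely what produces the crucial $r^{\alpha}$ factor from $\|H_{ij}\|_{L^{\infty}}$), while the frozen operator $T_{lm}(\xi_{0})$ requires an input in the latter. Proposition \ref{prop extension} is the bridge, but one must keep track of the several balls $\widetilde{B}_{r}$, $\widetilde{B}_{r'}$, $\widetilde{B}_{R}$ on which the estimates in Theorem \ref{Thm type zero continuity} and the extension lemma live.
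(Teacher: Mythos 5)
Your proposal is correct and follows essentially the same route as the paper: extend by zero via Proposition \ref{prop extension}, apply the continuity estimate (\ref{continuity}) for the frozen operator, and extract the factor $r^{\alpha}$ from the product $[\widetilde{a}_{ij}(\xi_{0})-\widetilde{a}_{ij}(\cdot)]\,(F_{ij}^{(1)}-F_{ij}^{(2)})$ using the smallness of both sup norms on $\widetilde{B}_{r}(\xi_{0})$. If anything, your steps (a)--(c) spell out more explicitly than the paper (which simply cites (\ref{1.2})) why the boundary vanishing built into $C_{X,\ast}^{\alpha}$ is what produces the contraction constant $c\,\omega(r)$.
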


\begin{proof}
Since $F_{ij}\in C_{\ast}^{\alpha}\left(  \widetilde{B}_{r}\left(  \xi
_{0}\right)  \right)  $, $F_{ij}$ can be extended to zero in $\widetilde
{B}_{R}\left(  \xi_{0}\right)  $, hence (see Proposition \ref{prop extension})%
\[
\sum_{i,j=1}^{q}\left[  \widetilde{a}_{ij}(\xi_{0})-\widetilde{a}_{ij}%
(\cdot)\right]  \,F_{ij}\in C_{X,0}^{\alpha}\left(  \widetilde{B}_{R}\left(
\xi_{0}\right)  \right)  ,
\]
and by Theorem \ref{Thm type zero continuity},
\[
T_{lm}\left(  \xi_{0}\right)  \left(  \sum_{i,j=1}^{q}\left[  \widetilde
{a}_{ij}(\xi_{0})-\widetilde{a}_{ij}(\cdot)\right]  \,F_{ij}\right)  \in
C_{X}^{\alpha}\left(  \widetilde{B}_{R}\left(  \xi_{0}\right)  \right)
\]
and%
\[
\beta T_{lm}\left(  \xi_{0}\right)  \left(  \sum_{i,j=1}^{q}\left[
\widetilde{a}_{ij}(\xi_{0})-\widetilde{a}_{ij}(\cdot)\right]  \,F_{ij}\right)
\in C_{X,\ast}^{\alpha}\left(  \widetilde{B}_{r}\left(  \xi_{0}\right)
\right)  .
\]

Since also $G_{l,m}\in C_{X}^{\alpha}\left(  \widetilde{B}_{R}\left(  \xi
_{0}\right)  \right)  $ and $\beta G_{l,m}\in C_{X,\ast}^{\alpha}\left(
\widetilde{B}_{r}\left(  \xi_{0}\right)  \right)  $, we conclude that
$\mathcal{T}$ maps $\left(  C_{X,\ast}^{\alpha}\left(  B_{r}\left(  \xi
_{0}\right)  \right)  \right)  ^{q\times q}$ in itself. In order to show that
$\mathcal{T}$ is a contraction, let $F^{\left(  1\right)  },F^{\left(
2\right)  }\in\left(  C_{X,\ast}^{\alpha}\left(  B_{r}\left(  \xi_{0}\right)
\right)  \right)  ^{q\times q}$. We have, by Theorem
\ref{Thm type zero continuity}\ and (\ref{1.2}):%
\begin{align*}
&  \left\Vert \mathcal{T}F^{\left(  1\right)  }-\mathcal{T}F^{\left(
2\right)  }\right\Vert _{\left(  C_{X}^{\alpha}\left(  \widetilde{B}%
_{r}\left(  \xi_{0}\right)  \right)  \right)  ^{q\times q}}\\
&  \leq\sum_{l,m=1}^{q}\left\Vert \beta T_{lm}\left(  \xi_{0}\right)
\sum_{i,j=1}^{q}\left[  \widetilde{a}_{ij}(\xi_{0})-\widetilde{a}_{ij}%
(\cdot)\right]  \,\left(  F_{ij}^{\left(  1\right)  }-F_{ij}^{\left(
2\right)  }\right)  \right\Vert _{C_{X}^{\alpha}\left(  \widetilde{B}%
_{r}\left(  \xi_{0}\right)  \right)  }\\
&  \leq\sum_{l,m=1}^{q}\sum_{i,j=1}^{q}c\left\Vert \left[  \widetilde{a}%
_{ij}(\xi_{0})-\widetilde{a}_{ij}(\cdot)\right]  \left(  F_{ij}^{\left(
1\right)  }-F_{ij}^{\left(  2\right)  }\right)  \right\Vert _{C_{X}^{\alpha
}\left(  \widetilde{B}_{r}\left(  \xi_{0}\right)  \right)  }\\
&  \leq c\omega\left(  r\right)  \left\Vert F^{\left(  1\right)  }-F^{\left(
2\right)  }\right\Vert _{\left(  C_{X}^{\alpha}\left(  \widetilde{B}%
_{r}\left(  \xi_{0}\right)  \right)  \right)  ^{q\times q}}%
\end{align*}
with%
\[
\omega\left(  r\right)  =\sup_{i,j=1,2,...,q}\left\vert \widetilde{a}%
_{ij}\right\vert _{C_{X}^{\alpha}\left(  \widetilde{B}_{R}\left(  \xi
_{9}\right)  \right)  }r^{\alpha}%
\]
Hence for $r$ small enough $\mathcal{T}$ is a contraction of $\left(
C_{X,\ast}^{\alpha}\left(  B_{r}\left(  \xi_{0}\right)  \right)  \right)
^{q\times q}$ in itself.
\end{proof}

Next, we need the following similar result:

\begin{theorem}
\label{thm contraction 1}If $v\in C_{X,0}^{2,\alpha}\left(  \widetilde
{B}\left(  \xi_{0},R\right)  \right)  $, $\widetilde{a}_{ij},\widetilde{L}v\in
C_{X}^{1,\alpha}\left(  \widetilde{B}_{R}\left(  \xi_{0}\right)  \right)  $
and $r$ is small enough, the operator $\mathcal{T}$ is also a contraction of
$\left(  C_{X,\ast}^{1,\alpha}\left(  \widetilde{B}_{r}\left(  \xi_{0}\right)
\right)  \right)  ^{q\times q}$ in itself.
\end{theorem}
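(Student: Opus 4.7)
The strategy mirrors Theorem~\ref{thm contraction 0}, but now in the $C^{1,\alpha}$ scale. The new ingredient is Theorem~\ref{thm type 0 exchange}, which lets us commute a derivative $\widetilde{X}_k$ past each frozen type-zero operator at the cost of lower-order terms of the same form.

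\textit{Step 1 (Self-mapping).} Under the strengthened hypotheses, the function $G_{l,m}$ in \eqref{G_l,m} belongs to $C_X^{1,\alpha}(\widetilde{B}_R(\xi_0))$: its ingredients $\widetilde{L}v,\widetilde{X}_k v, v$ all lie in $C_X^{1,\alpha}$, and combining Theorems~\ref{Thm type zero continuity} and~\ref{thm type 0 exchange} shows that every frozen type-zero operator sends $C^{1,\alpha}$ into $C^{1,\alpha}$. For $F\in(C_{X,\ast}^{1,\alpha}(\widetilde{B}_r(\xi_0)))^{q\times q}$, Proposition~\ref{prop extension} allows extension by zero, so that $H:=\sum_{ij}[\widetilde{a}_{ij}(\xi_0)-\widetilde{a}_{ij}]F_{ij}$ belongs to $C_{X,0}^{1,\alpha}(\widetilde{B}_R)$; hence $T_{lm}(\xi_0)H\in C_X^{1,\alpha}$, and multiplying by the cutoff $\beta$ lands back in $C_{X,\ast}^{1,\alpha}(\widetilde{B}_r)$.

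\textit{Step 2 (Contraction).} Write $\Delta F=F^{(1)}-F^{(2)}$ and $H=\sum_{ij}[\widetilde{a}_{ij}(\xi_0)-\widetilde{a}_{ij}]\Delta F_{ij}$. The product rule together with Theorem~\ref{thm type 0 exchange} gives
\[
\widetilde{X}_k\bigl[\beta\, T_{lm}(\xi_0)H\bigr]=(\widetilde{X}_k\beta)\,T_{lm}(\xi_0)H+\beta\sum_{h=1}^q T_h^k(\xi_0)\widetilde{X}_h H+\beta\, T_k^0(\xi_0)H,
\]
so by Theorem~\ref{Thm type zero continuity} the contraction estimate reduces to controlling $\|H\|_{C^\alpha}$ and $\|\widetilde{X}_h H\|_{C^\alpha}$ by a factor small in $r$ times $\|\Delta F\|_{C^{1,\alpha}}$. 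The first bound is exactly what Theorem~\ref{thm contraction 0} provides.

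\textit{Step 3 (Main obstacle).} For $\widetilde{X}_h H$, the product rule yields two types of summands: $(\widetilde{X}_h\widetilde{a}_{ij})\Delta F_{ij}$ and $[\widetilde{a}_{ij}(\xi_0)-\widetilde{a}_{ij}]\widetilde{X}_h\Delta F_{ij}$. Since $\widetilde{X}_h\widetilde{a}_{ij}$ is \emph{not} small in $r$, all the smallness must come from $\Delta F$---this is precisely why we work in $C_{X,\ast}^{1,\alpha}$ rather than $C_X^{1,\alpha}$. Because $\widetilde{X}_h\Delta F\in C_{X,\ast}^\alpha$ vanishes on $\partial\widetilde{B}_r$, comparing with a boundary point gives $\|\widetilde{X}_h\Delta F\|_{L^\infty}\leq Cr^\alpha\|\Delta F\|_{C^{1,\alpha}}$; then applying Proposition~\ref{basic properties for holder norm}(i) to the extension by zero of $\Delta F$ yields $\|\Delta F\|_{L^\infty}\leq Cr^{1+\alpha}\|\Delta F\|_{C^{1,\alpha}}$, and a standard Hölder interpolation produces $|\Delta F|_{C^\alpha}\leq Cr\|\Delta F\|_{C^{1,\alpha}}$. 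Inserting these smallness estimates into Proposition~\ref{basic properties for holder norm}(ii) bounds both types of summands in the $C^\alpha$ norm by $Cr^\alpha\|\Delta F\|_{C^{1,\alpha}}$. Choosing $r$ sufficiently small (possibly smaller than in Theorem~\ref{thm contraction 0}) then makes $\mathcal{T}$ a contraction of $(C_{X,\ast}^{1,\alpha}(\widetilde{B}_r(\xi_0)))^{q\times q}$ into itself.
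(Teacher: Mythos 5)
Your proposal is correct and follows essentially the same route as the paper's proof: you commute $\widetilde{X}_k$ past the frozen type-zero operators via Theorem \ref{thm type 0 exchange}, and you recover smallness in the problematic summand $(\widetilde{X}_h\widetilde{a}_{ij})\Delta F_{ij}$ from the vanishing of $\Delta F$ (and its derivatives) on $\partial\widetilde{B}_r(\xi_0)$ together with Proposition \ref{basic properties for holder norm}, which is exactly how the paper bounds its term $B$. The remaining terms are handled as in Theorem \ref{thm contraction 0}, again matching the paper.
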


\begin{proof}
We already know that%
\[
\beta G_{l,m}=\beta T_{lm}\left(  \xi_{0}\right)  \widetilde{\mathcal{L}%
}v+\sum_{k=1}^{q}\beta T_{lm,k}^{A}\left(  \xi_{0}\right)  \widetilde{X}%
_{k}v+\beta T_{lm}^{A}\left(  \xi_{0}\right)  v\in C_{X,0}^{\alpha}\left(
\widetilde{B}_{r}\left(  \xi_{0}\right)  \right)  .
\]
To show that also $\widetilde{X}_{h}\left(  \beta G_{l,m}\right)  \in
C_{X,\ast}^{\alpha}\left(  \widetilde{B}_{r}\left(  \xi_{0}\right)  \right)
$, let us compute%

\begin{align*}
\widetilde{X}_{h}\left(  \beta G_{l,m}\right)   &  =\left(  \widetilde{X}%
_{h}\beta\right)  G_{l,m}+\beta\left\{  \widetilde{X}_{h}T_{lm}\left(  \xi
_{0}\right)  \widetilde{\mathcal{L}}v+\right. \\
&  +\left.  \sum_{k=1}^{q}\widetilde{X}_{h}T_{lm,k}^{A}\left(  \xi_{0}\right)
\widetilde{X}_{k}v+\widetilde{X}_{h}T_{lm}^{A}\left(  \xi_{0}\right)
v\right\}
\end{align*}
exploiting Theorem \ref{thm type 0 exchange}%
\begin{align*}
&  =\left(  \widetilde{X}_{h}\beta\right)  G_{l,m}+\beta\left\{  \left(
\sum_{s=1}^{q}T_{lm}^{s}\left(  \xi_{0}\right)  \widetilde{X}_{s}+T_{lm}%
^{0}\left(  \xi_{0}\right)  \right)  \widetilde{\mathcal{L}}v+\right. \\
&  +\left.  \sum_{k=1}^{q}\left(  \sum_{s=1}^{q}T_{lm,k}^{A,s}\left(  \xi
_{0}\right)  \widetilde{X}_{s}+T_{lm,k}^{A,0}\left(  \xi_{0}\right)  \right)
\widetilde{X}_{k}v\right. \\
&  \left.  +\left(  \sum_{s=1}^{q}T_{lm}^{A,s}\left(  \xi_{0}\right)
\widetilde{X}_{s}+T_{lm}^{A,0}\left(  \xi_{0}\right)  \right)  v\right\}  .
\end{align*}
Recalling that $v\in C_{X,0}^{2,\alpha}\left(  \widetilde{B}_{R}\left(
\xi_{0}\right)  \right)  $ by (\ref{continuity}) we get that the quantity in
$\left\{  ...\right\}  $ belongs to $C_{X}^{\alpha}\left(  \widetilde{B}%
_{R}\left(  \xi_{0}\right)  \right)  $, hence by our choice of $\beta$,
\[
\widetilde{X}_{h}\left(  \beta G_{l,m}\right)  \in C_{X,0}^{\alpha}\left(
\widetilde{B}_{r}\left(  \xi_{0}\right)  \right)  \subset C_{X,\ast}^{\alpha
}\left(  \widetilde{B}_{r}\left(  \xi_{0}\right)  \right)  .
\]

As to the other term of $\mathcal{T}\left(  F\right)  $,
\[
\beta T_{lm}\left(  \xi_{0}\right)  \left(  \sum_{i,j=1}^{q}\left[
\widetilde{a}_{ij}(\xi_{0})-\widetilde{a}_{ij}(\cdot)\right]  \,F_{ij}\right)
,
\]
for $\widetilde{a}_{ij}\in C_{X}^{1,\alpha}\left(  \widetilde{B}_{R}\left(
\xi_{0}\right)  \right)  ,F_{ij}\in C_{X,\ast}^{1,\alpha}\left(  \widetilde
{B}_{r}\left(  \xi_{0}\right)  \right)  $ we can compute, by Theorem
\ref{thm type 0 exchange}:%
\begin{align*}
&  \widetilde{X}_{k}\left(  \beta T_{lm}\left(  \xi_{0}\right)  \left(
\sum_{i,j=1}^{q}\left[  \widetilde{a}_{ij}(\xi_{0})-\widetilde{a}_{ij}%
(\cdot)\right]  \,F_{ij}\right)  \right) \\
&  =\beta\left\{  \left(  \sum_{s=1}^{q}T_{lm}^{s}\left(  \xi_{0}\right)
\widetilde{X}_{s}+T_{lm}^{0}\left(  \xi_{0}\right)  \right)  \left(
\sum_{i,j=1}^{q}\left[  \widetilde{a}_{ij}(\xi_{0})-\widetilde{a}_{ij}%
(\cdot)\right]  F_{ij}\right)  \right\}  +
\end{align*}%
\begin{align*}
&  +\left(  \widetilde{X}_{k}\beta\right)  T_{lm}\left(  \xi_{0}\right)
\left(  \sum_{i,j=1}^{q}\left[  \widetilde{a}_{ij}(\xi_{0})-\widetilde{a}%
_{ij}(\cdot)\right]  \,F_{ij}\right) \\
&  =\beta\left\{  \sum_{s=1}^{q}T_{lm}^{s}\left(  \xi_{0}\right)  \left(
\sum_{i,j=1}^{q}\left[  \widetilde{a}_{ij}(\xi_{0})-\widetilde{a}_{ij}%
(\cdot)\right]  \widetilde{X}_{s}F_{ij}\right)  -\sum_{s=1}^{q}T_{lm}%
^{s}\left(  \xi_{0}\right)  \left(  \sum_{i,j=1}^{q}\left(  \widetilde{X}%
_{s}\widetilde{a}_{ij}\right)  F_{ij}\right) + \right. \\
&  +\left.  T_{lm}^{0}\left(  \xi_{0}\right)  \left(  \sum_{i,j=1}^{q}\left[
\widetilde{a}_{ij}(\xi_{0})-\widetilde{a}_{ij}(\cdot)\right]  F_{ij}\right)
\right\}  +\left(  \widetilde{X}_{k}\beta\right)  T_{lm}\left(  \xi
_{0}\right)  \left(  \sum_{i,j=1}^{q}\left[  \widetilde{a}_{ij}(\xi
_{0})-\widetilde{a}_{ij}(\cdot)\right]  \,F_{ij}\right)  .
\end{align*}
Since, under our assumptions, all the functions:%
\begin{align*}
&  \sum_{i,j=1}^{q}\left[  \widetilde{a}_{ij}(\xi_{0})-\widetilde{a}%
_{ij}(\cdot)\right]  \widetilde{X}_{s}F_{ij}\\
&  \sum_{i,j=1}^{q}\left(  \widetilde{X}_{s}\widetilde{a}_{ij}\right)
F_{ij}\\
&  \sum_{i,j=1}^{q}\left[  \widetilde{a}_{ij}(\xi_{0})-\widetilde{a}%
_{ij}(\cdot)\right]  F_{ij}%
\end{align*}
belong to $C_{X,\ast}^{\alpha}\left(  \widetilde{B}_{r}(\xi_{0})\right)
\subset C_{X,0}^{\alpha}\left(  \widetilde{B}_{R}(\xi_{0})\right)  ,$ by
(\ref{continuity}) and our choice of $\beta$ we conclude%
\[
\widetilde{X}_{k}\left(  \beta T_{lm}\left(  \xi_{0}\right)  \left(
\sum_{i,j=1}^{q}\left[  \widetilde{a}_{ij}(\xi_{0})-\widetilde{a}_{ij}%
(\cdot)\right]  \,F_{ij}\right)  \right)  \in C_{X,\ast}^{\alpha}\left(
\widetilde{B}_{r}(\xi_{0})\right)  ,
\]
hence $\mathcal{T}$ maps $C_{X,\ast}^{1,\alpha}\left(  \widetilde{B}%
_{r}\left(  \xi_{0}\right)  \right)  $ in itself. Let us show that
$\mathcal{T}$ is also a contraction in $C_{X,\ast}^{1,\alpha}\left(
\widetilde{B}_{r}\left(  \xi_{0}\right)  \right)  $. We already know that%
\begin{equation}
\left\Vert \mathcal{T}F^{\left(  1\right)  }-\mathcal{T}F^{\left(  2\right)
}\right\Vert _{C_{X}^{\alpha}\left(  \widetilde{B}_{r}\left(  \xi_{0}\right)
\right)  ^{q\times q}}\leq c\omega\left(  r\right)  \left\Vert F^{\left(
1\right)  }-F^{\left(  2\right)  }\right\Vert _{\left(  C_{X}^{\alpha}\left(
\widetilde{B}_{r}\left(  \xi_{0}\right)  \right)  \right)  ^{q\times q}}
\label{contraz 0}%
\end{equation}
so let us compute%
\begin{align*}
&  \widetilde{X}_{k}\mathcal{T}F^{\left(  1\right)  }-\widetilde{X}%
_{k}\mathcal{T}F^{\left(  2\right)  }\\
&  =\beta\left\{  \sum_{s=1}^{q}T_{lm}^{s}\left(  \xi_{0}\right)  \left(
\sum_{i,j=1}^{q}\left[  \widetilde{a}_{ij}(\xi_{0})-\widetilde{a}_{ij}%
(\cdot)\right]  \left(  \widetilde{X}_{s}F_{ij}^{\left(  1\right)
}-\widetilde{X}_{s}F_{ij}^{\left(  2\right)  }\right)  \right)  \right. \\
&  -\sum_{s=1}^{q}T_{lm}^{s}\left(  \xi_{0}\right)  \left(  \sum_{i,j=1}%
^{q}\widetilde{X}_{s}\widetilde{a}_{ij}\left(  F_{ij}^{\left(  1\right)
}-F_{ij}^{\left(  2\right)  }\right)  \right)  +
\end{align*}%
\begin{align*}
&  +\left.  T_{lm}^{0}\left(  \xi_{0}\right)  \left(  \sum_{i,j=1}^{q}\left[
\widetilde{a}_{ij}(\xi_{0})-\widetilde{a}_{ij}(\cdot)\right]  \left(
F_{ij}^{\left(  1\right)  }-F_{ij}^{\left(  2\right)  }\right)  \right)
\right\}  +\\
&  +\left(  \widetilde{X}_{k}\beta\right)  T_{lm}\left(  \xi_{0}\right)
\left(  \sum_{i,j=1}^{q}\left[  \widetilde{a}_{ij}(\xi_{0})-\widetilde{a}%
_{ij}(\cdot)\right]  \,\left(  F_{ij}^{\left(  1\right)  }-F_{ij}^{\left(
2\right)  }\right)  \right) \\
&  \equiv A+B+C+D.
\end{align*}
Applying again Theorem \ref{Thm type zero continuity}\ and (\ref{1.2}),%
\begin{align}
&  \left\Vert A\right\Vert _{C_{X}^{\alpha}\left(  \widetilde{B}_{r}\left(
\xi_{0}\right)  \right)  }\leq c\omega\left(  r\right)  \left\Vert
XF_{ij}^{\left(  1\right)  }-XF_{ij}^{\left(  2\right)  }\right\Vert
_{C_{X}^{\alpha}\left(  \widetilde{B}_{r}\left(  \xi_{0}\right)  \right)
}\label{A}\\
&  \leq c\omega\left(  r\right)  \left\Vert F_{ij}^{\left(  1\right)  }%
-F_{ij}^{\left(  2\right)  }\right\Vert _{C_{X}^{1,\alpha}\left(
\widetilde{B}_{r}\left(  \xi_{0}\right)  \right)  }\nonumber\\
&  \left\Vert C\right\Vert _{C_{X}^{\alpha}\left(  \widetilde{B}_{r}\left(
\xi_{0}\right)  \right)  }+\left\Vert D\right\Vert _{C_{X}^{\alpha}\left(
\widetilde{B}_{r}\left(  \xi_{0}\right)  \right)  }\leq c\omega\left(
r\right)  \left\Vert F_{ij}^{\left(  1\right)  }-F_{ij}^{\left(  2\right)
}\right\Vert _{C_{X}^{\alpha}\left(  \widetilde{B}_{r}\left(  \xi_{0}\right)
\right)  }\label{C, D}\\
&  \left\Vert B\right\Vert _{C_{X}^{\alpha}\left(  \widetilde{B}_{r}\left(
\xi_{0}\right)  \right)  }\leq c\sum_{s,i,j=1}^{q}\left\Vert X\widetilde
{a}_{ij}\right\Vert _{C_{X}^{\alpha}\left(  \widetilde{B}_{r}\left(  \xi
_{0}\right)  \right)  }\left\Vert F_{ij}^{\left(  1\right)  }-F_{ij}^{\left(
2\right)  }\right\Vert _{C_{X}^{\alpha}\left(  \widetilde{B}_{r}\left(
\xi_{0}\right)  \right)  }.\nonumber
\end{align}

To complete the bound on $B$, let us note that if $g\in C_{X,\ast}^{1,\alpha
}\left(  \widetilde{B}_{r}\left(  \xi_{0}\right)  \right)  $ we have%
\[
\left\Vert g\right\Vert _{\infty}\leq\sup_{\xi,\eta\in\widetilde{B}_{r}\left(
\xi_{0}\right)  }\left\vert g\left(  \xi\right)  -g\left(  \eta\right)
\right\vert \leq\left\vert g\right\vert _{C^{\alpha}\left(  \widetilde{B}%
_{r}\left(  \xi_{0}\right)  \right)  }\left(  2r\right)  ^{\alpha}%
\]
and applying (\ref{4.6}) (seeing $g$ as a function in $C_{X,0}^{1,\alpha
}\left(  \widetilde{B}_{R}\left(  \xi_{0}\right)  \right)  $),%
\[
\left\vert g\right\vert _{C^{\alpha}\left(  \widetilde{B}_{r}\left(  \xi
_{0}\right)  \right)  }=\sup_{\xi,\eta\in\widetilde{B}_{r}\left(  \xi
_{0}\right)  }\frac{\left\vert g\left(  \xi\right)  -g\left(  \eta\right)
\right\vert }{d_{\widetilde{X}}\left(  \xi,\eta\right)  ^{\alpha}}\leq
\sup_{\widetilde{B}_{r}\left(  \xi_{0}\right)  }\left\vert \widetilde
{X}g\right\vert \left(  2r\right)  ^{1-\alpha},
\]
hence%
\[
\left\Vert g\right\Vert _{C_{X}^{\alpha}\left(  \widetilde{B}_{r}\left(
\xi_{0}\right)  \right)  }\leq\left\Vert g\right\Vert _{C_{X}^{1,\alpha
}\left(  \widetilde{B}_{r}\left(  \xi_{0}\right)  \right)  }\left(  \left(
2r\right)  ^{\alpha}+\left(  2r\right)  ^{1-\alpha}\right)
\]
and%
\begin{equation}
\left\Vert B\right\Vert _{C_{X}^{\alpha}\left(  \widetilde{B}_{r}\left(
\xi_{0}\right)  \right)  }\leq c\sum_{s,i,j=1}^{q}\left\Vert \widetilde{X}%
_{s}\widetilde{a}_{ij}\right\Vert _{C_{X}^{\alpha}\left(  B_{r}\left(  \xi
_{0}\right)  \right)  }\left(  \left(  2r\right)  ^{\alpha}+\left(  2r\right)
^{1-\alpha}\right)  \left\Vert F_{ij}^{\left(  1\right)  }-F_{ij}^{\left(
2\right)  }\right\Vert _{C_{X}^{1,\alpha}\left(  \widetilde{B}_{r}\left(
\xi_{0}\right)  \right)  }. \label{B}%
\end{equation}
From (\ref{contraz 0}), (\ref{A}), (\ref{C, D}), (\ref{B}) we deduce that for
$r$ small enough%
\[
\left\Vert \mathcal{T}F^{\left(  1\right)  }-\mathcal{T}F^{\left(  2\right)
}\right\Vert _{\left(  C_{X}^{1,\alpha}\left(  \widetilde{B}_{r}\left(
\xi_{0}\right)  \right)  \right)  ^{q\times q}}\leq\delta\left\Vert F^{\left(
1\right)  }-F^{\left(  2\right)  }\right\Vert _{\left(  C_{X}^{1,\alpha
}\left(  B_{r}\left(  \xi_{0}\right)  \right)  \right)  ^{q\times q}}%
\]
with $\delta<1,$ and we are done.
\end{proof}

We now come to the

\bigskip

\begin{proof}
[Conclusion of the proof of Theorem \ref{thm main linear}]By Remark
\ref{remark principal part} it is enough to prove the theorem for $b_{i}=c=0$.
We will prove the regularity result for $k=1$; an iterative argument gives the
general case. Also, once the $C_{X,loc}^{k+2,\alpha}\left(  \Omega\right)  $
is proved for every $k$, H\"{o}rmander's condition implies that a solution
$u\in C_{X,loc}^{k+2,\alpha}\left(  \Omega\right)  $ for any $k$ is also
smooth in Euclidean sense.

So, let $u\in C_{X}^{2,\alpha}\left(  \Omega\right)  $ satisfy the equation%
\[
Lu\equiv\sum_{i,j=1}^{q}a_{ij}\left(  x\right)  X_{i}X_{j}u=f\text{ in }\Omega
\]
and assume that%
\[
a_{ij},f,\in C_{X}^{1,\alpha}\left(  \Omega\right)  .
\]
Fix $x_{0}\in\Omega$ and a small ball $B_{R}\left(  x_{0}\right)
\subset\Omega$ where the lifting procedure is applicable. Let $\xi_{0}=\left(
x_{0},0\right)  ,\xi=\left(  x,h\right)  $. Then by Proposition
\ref{Proposition de-lifting},
\begin{align*}
\widetilde{u}\left(  \xi\right)   &  =u\left(  x\right)  \in C_{X}^{2,\alpha
}\left(  \widetilde{B}_{R}\left(  \xi_{0}\right)  \right)  ,\\
\widetilde{a}_{ij}\left(  \xi\right)   &  =a_{ij}\left(  x\right)
,\widetilde{f}\left(  \xi\right)  =f\left(  x\right)  \in C_{X}^{1,\alpha
}\left(  \widetilde{B}_{R}\left(  \xi_{0}\right)  \right) \\
\widetilde{L}\widetilde{u}  &  =\widetilde{f}\text{ in }\widetilde{B}%
_{R}\left(  \xi_{0}\right)  .
\end{align*}
Then, let $\phi\in C_{0}^{\infty}\left(  \widetilde{B}_{r}\left(  \xi
_{0}\right)  \right)  $ such that $\phi=1$ in $\widetilde{B}_{r/2}\left(
\xi_{0}\right)  $, hence $v=\phi\widetilde{u}\in C_{X,0}^{2,\alpha}\left(
\widetilde{B}_{R}\left(  \xi_{0}\right)  \right)  $ and%
\[
\widetilde{L}v=\phi\widetilde{f}+2\sum_{i,j=1}^{q}\widetilde{a}_{ij}%
\widetilde{X}_{i}\widetilde{u}\widetilde{X}_{j}\phi+\widetilde{u}\widetilde
{L}\phi\equiv g\in C_{X}^{1,\alpha}\left(  \widetilde{B}_{R}\left(  \xi
_{0}\right)  \right)  .
\]

For this function $v$ the representation formula (\ref{rep formula}) holds
true, with $G_{lm}$ given by (\ref{G_l,m}). Let us define the number $r$, the
cutoff function $\beta$ and the operator $\mathcal{T}$ as in Theorems
\ref{thm contraction 0}, \ref{thm contraction 1}. Since $\left(  C_{X,\ast
}^{\alpha}\left(  \widetilde{B}_{r}\left(  \xi_{0}\right)  \right)  \right)
^{q\times q}$ and $\left(  C_{X,\ast}^{1,\alpha}\left(  \widetilde{B}%
_{r}\left(  \xi_{0}\right)  \right)  \right)  ^{q\times q}$ are Banach spaces,
by the Banach-Caccioppoli Theorem the operator $\mathcal{T}$ possesses a
unique fixed point $W$ both in $\left(  C_{X,\ast}^{\alpha}\left(
\widetilde{B}_{r}\left(  \xi_{0}\right)  \right)  \right)  ^{q\times q}$ and
in $\left(  C_{X,\ast}^{1,\alpha}\left(  \widetilde{B}_{r}\left(  \xi
_{0}\right)  \right)  \right)  ^{q\times q}$. On the other hand, since
$\widetilde{X}_{i}\widetilde{X}_{j}v\in C_{X,\ast}^{\alpha}\left(
\widetilde{B}_{r}\left(  \xi_{0}\right)  \right)  $, by (\ref{rep formula})
choosing $a\left(  \xi\right)  =1$ in $\widetilde{B}_{R/2}\left(  \xi
_{0}\right)  \supset\widetilde{B}_{r/2}\left(  \xi_{0}\right)  $, we get
\[
\widetilde{X}_{i}\widetilde{X}_{j}v\left(  \xi\right)  =W\left(  \xi\right)
\text{ in }\widetilde{B}_{r/2}\left(  \xi_{0}\right)  ,
\]
hence $\widetilde{X}_{i}\widetilde{X}_{j}v\in C_{X}^{1,\alpha}\left(
\widetilde{B}_{r/2}\left(  \xi_{0}\right)  \right)  $ and also $\widetilde
{X}_{i}\widetilde{X}_{j}\widetilde{u}\in C_{X}^{1,\alpha}\left(  \widetilde
{B}_{r/2}\left(  \xi_{0}\right)  \right)  $. By Proposition
\ref{Proposition de-lifting} this implies%
\[
X_{i}X_{j}u\in C_{X}^{1,\alpha}\left(  B_{r/4}\left(  x_{0}\right)  \right)
\]
therefore $u\in C_{X}^{3,\alpha}\left(  B_{r/4}\left(  x_{0}\right)  \right)
$ and by a covering argument $u\in C_{X,loc}^{3,\alpha}\left(  \Omega\right)
$.
\end{proof}

\section{Smoothness of solutions to quasilinear
equations\label{sec quasilinear}}

Let us apply the previous linear theory to a regularity result for solutions
to quasilinear equations.

\begin{theorem}
\label{Thm main quasilinear}Let%
\[
Qu\equiv\sum_{i,j=1}^{q}a_{ij}\left(  x,u,Xu\right)  X_{i}X_{j}u+b\left(
x,u,Xu\right)
\]
where $X_{1},X_{2},...,X_{q}$ are as above, $a_{ij}=a_{ji},$
\[
\Lambda|\xi|^{2}\leq\sum_{i,j=1}^{q}a_{ij}\left(  x,u,p\right)  \xi_{i}\xi
_{j}\leq\Lambda^{-1}|\xi|^{2}\text{ }\forall\xi\in{\mathbb{R}}^{q},\left(
x,u,p\right)  \in\Omega\times\mathbb{R}\times\mathbb{R}^{q}.
\]
and assume that for some $k=1,2,3,...,\alpha\in(0,1)$%
\[
a_{ij},b\in C_{X}^{k,\alpha}\left(  \Omega\times\mathbb{R}\times\mathbb{R}%
^{q}\right)  .
\]
If $u\in C_{X}^{2,\alpha}\left(  \Omega\right)  $ solves the equation $Qu=0$,
then $u\in C_{X,loc}^{k+2,\alpha}\left(  \Omega\right)  $. In particular, if
$a_{ij},b$ are smooth, then $u$ is also smooth.
\end{theorem}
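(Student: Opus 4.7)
The plan is to convert $Qu=0$ into a sequence of linear equations and then bootstrap using Theorem \ref{thm main linear}. Given $u\in C_X^{2,\alpha}(\Omega)$ with $Qu=0$, set
$$\widetilde{a}_{ij}(x) := a_{ij}(x,u(x),Xu(x)), \qquad \widetilde{b}(x) := b(x,u(x),Xu(x)),$$
so that $u$ satisfies the linear, uniformly elliptic equation $\sum_{i,j=1}^q \widetilde{a}_{ij}(x)\, X_iX_j u = -\widetilde{b}(x)$ in $\Omega$. Theorem \ref{thm main linear} applied to this frozen equation will then promote the regularity of $u$, which in turn upgrades the regularity of $\widetilde{a}_{ij}$ and $\widetilde{b}$, and a finite induction closes the argument.

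The key technical tool needed is a composition lemma in the $C_X^{k,\alpha}$-scale: if $F\in C_X^{m,\alpha}(\Omega\times\mathbb{R}\times\mathbb{R}^{q})$ (meaning $X$-regularity in $x$ and classical Euclidean regularity in $(z,p)$) and $u\in C_X^{s,\alpha}(\Omega)$ with $s\ge 2$, then the composition $\Phi(x):=F(x,u(x),Xu(x))$ belongs to $C_X^{\min(s-1,m),\alpha}(\Omega)$. This is proved by induction on the number of $X$-derivatives via the chain rule
$$X_i\Phi = (X_i F)(x,u,Xu) + \frac{\partial F}{\partial z}(x,u,Xu)\, X_i u + \sum_{k=1}^{q} \frac{\partial F}{\partial p_k}(x,u,Xu)\, X_i X_k u,$$
combined with the product and Hölder estimates of Proposition \ref{basic properties for holder norm}. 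Each application of an $X$-derivative to the composition produces a factor $X_i X_k u$, so the composition loses exactly one index of regularity relative to $u$, which explains the $s-1$ in the bound.

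With the composition lemma in hand the bootstrap is a clean finite induction. Base step: since $u\in C_X^{2,\alpha}$ and $a_{ij},b\in C_X^{k,\alpha}$ with $k\ge 1$, the lemma gives $\widetilde{a}_{ij},\widetilde{b}\in C_X^{1,\alpha}(\Omega)$, and Theorem \ref{thm main linear} upgrades $u$ to $C_{X,loc}^{3,\alpha}(\Omega)$. Inductive step: if $u\in C_{X,loc}^{j+1,\alpha}$ for some $1\le j\le k$, the composition lemma (using $j\le k$) yields $\widetilde{a}_{ij},\widetilde{b}\in C_{X,loc}^{j,\alpha}$, and a further application of Theorem \ref{thm main linear} lifts $u$ to $C_{X,loc}^{j+2,\alpha}$. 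After $k$ iterations one arrives at $u\in C_{X,loc}^{k+2,\alpha}(\Omega)$. The smoothness conclusion then follows by letting $k$ be arbitrary in the smooth coefficient case and invoking Hörmander's condition, exactly as in the linear theorem.

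The main obstacle is a rigorous justification of the composition lemma in the anisotropic $C_X^{k,\alpha}$-scale, where one must carefully track the mixed $X$-Euclidean regularity of $F$ and how iterated $X$-derivatives distribute across a composition whose highest-order inner factor is $X_iX_ku$. Once this composition lemma is granted, the entire quasilinear regularity theorem reduces to the bookkeeping induction above, with no further analytical input beyond Theorem \ref{thm main linear}.
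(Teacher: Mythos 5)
Your proposal is correct and follows essentially the same route as the paper: freeze the coefficients along the solution, apply Theorem \ref{thm main linear} to the resulting linear equation, and bootstrap until $u\in C_{X,loc}^{k+2,\alpha}(\Omega)$. The paper simply asserts the memberships $\overline{a}_{ij},g\in C_{X}^{1,\alpha}(\Omega)$ (and then $C_{X,loc}^{j,\alpha}$ at each stage) without isolating the composition lemma you state explicitly, but the underlying argument is identical.
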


\begin{proof}
Under our assumptions we have that $u$ is a solution to the linear equation%
\[
Lf\left(  x\right)  \equiv\sum_{i,j=1}^{q}\overline{a}_{ij}\left(  x\right)
X_{i}X_{j}f\left(  x\right)  =g\left(  x\right)
\]
where
\begin{align*}
\overline{a}_{ij}\left(  x\right)   &  =a_{ij}\left(  x,u\left(  x\right)
,Xu\left(  x\right)  \right)  \in C_{X}^{1,\alpha}\left(  \Omega\right) \\
g\left(  x\right)   &  =-b\left(  x,u\left(  x\right)  ,Xu\left(  x\right)
\right)  \in C_{X}^{1,\alpha}\left(  \Omega\right)
\end{align*}
hence by Theorem \ref{thm main linear}, $u\in C_{X,loc}^{3,\alpha}\left(
\Omega\right)  $; if $k=1$ we are done, while if $k\geq2,$
\[
\text{since }u\in C_{X,loc}^{3,\alpha}\left(  \Omega\right)  \text{, then
}\overline{a}_{ij},g\in C_{X,loc}^{2,\alpha}\left(  \Omega\right)
\]
and by Theorem \ref{thm main linear} $u\in C_{X,loc}^{4,\alpha}\left(
\Omega\right)  $. Iteration gives the desired result. Again, H\"{o}rmander's
condition assures that if $u$ belongs to $C_{X,loc}^{k+2,\alpha}\left(
\Omega\right)  $ for any $k$, then it is also smooth in the Euclidean sense.
\end{proof}

\section{The evolution case\label{sec evolution}}

In virtue of the results contained in \cite{BB2007} all the previous theory
can be developed also in the evolution case. Let us consider the linear
operator%
\[
H=\partial_{t}-\sum_{i,j=1}^{q}a_{ij}\left(  t,x\right)  X_{i}X_{j}+\sum
_{i=1}^{q}b_{i}\left(  t,x\right)  X_{i}+c\left(  t,x\right)
\]
where the $X_{i}$'s are still a system of H\"{o}rmander's vector fields in a
neighborhood $\Omega_{0}$ of a bounded domain $\Omega$, $Q=\left(  0,T\right)
\times\Omega$. We define in $Q$ the parabolic distance%
\[
d_{P}\left(  \left(  t,x\right)  ,\left(  s,y\right)  \right)  =\sqrt
{d_{X}\left(  x,y\right)  ^{2}+\left\vert t-s\right\vert }%
\]
and define the spaces $C_{P}^{\alpha}\left(  Q\right)  $ of H\"{o}lder
continuous functions of exponent $\alpha$ with respect to the distance $d_{P}%
$, and the spaces $C_{P}^{k,\alpha}\left(  Q\right)  $ of functions such that
all the derivatives up to weight $k$ with respect to the $X_{i}$'s and
$\partial_{t}$, with $\partial_{t}$ weighting as a second order derivative,
belong to $C_{P}^{\alpha}\left(  Q\right)  $. We assume with $a_{ij}%
,b_{i},c\in C_{P}^{\alpha}\left(  Q\right)  $ for some $\alpha\in\left(
0,1\right)  $, $a_{ij}=a_{ji}$ satisfying the condition%
\[
\Lambda|\xi|^{2}\leq\sum_{i,j=1}^{q}a_{ij}\left(  t,x\right)  \xi_{i}\xi
_{j}\leq\Lambda^{-1}|\xi|^{2}\text{ }\forall\xi\in{\mathbb{R}}^{q},\left(
t,x\right)  \in Q.
\]
Then the same reasoning of \S \ref{sec linear} gives the following:

\begin{theorem}
\label{Thm linear evolution}Under the above assumptions, let $u\in
C_{P}^{2,\alpha}\left(  Q\right)  $ satisfy the equation%
\[
Hu=f\text{ in }Q
\]
and assume that for some integer $k=1,2,3,...$we have:%
\[
a_{ij},b_{i},c,f,\in C_{P}^{k,\alpha}\left(  Q\right)  .
\]
Then%
\[
u\in C_{P,loc}^{k+2,\alpha}\left(  Q\right)  .
\]

\end{theorem}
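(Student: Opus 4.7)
The plan is to transfer the argument of \S\ref{sec linear} verbatim, substituting for each elliptic ingredient its parabolic counterpart from \cite{BB2007}. First, in the spirit of Remark \ref{remark principal part}, a bootstrap reduces the problem to the principal part $H_0 = \partial_t - \sum_{i,j} a_{ij}(t,x)X_iX_j$: once the result holds for $H_0$, the terms $b_iX_iu + cu$ can be absorbed into a new right-hand side still in $C_P^{k,\alpha}(Q)$, since those terms have weight strictly less than $k+2$. It also suffices to treat $k=1$ and iterate; the final smoothness statement follows from H\"ormander's hypoellipticity once $u \in C_{P,loc}^{k+2,\alpha}(Q)$ is established for every $k$.

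For the $k=1$ case, I fix $(t_0,x_0)\in Q$ and work on a small parabolic cylinder, applying the spatial lifting procedure (which leaves $\partial_t$ unchanged, preserving its weight-$2$ role). The results of \cite{BB2007} supply, in the parabolic setting: a representation formula of the form (\ref{id basic}) with \emph{parabolic} frozen operators of type zero $T_{lm}(\xi_0)$ acting on $\widetilde{H}v$ and on the commutator $\sum[\widetilde{a}_{ij}(\xi_0)-\widetilde{a}_{ij}(\cdot)]\widetilde{X}_i\widetilde{X}_jv$; the continuity estimate (\ref{continuity}) on $C_P^\alpha$; an exchange identity for $\widetilde{X}_k T(\xi_0)$ analogous to Theorem \ref{thm type 0 exchange}; and the parabolic version of the delifting Proposition \ref{Proposition de-lifting}. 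With a cutoff $\phi$, set $v=\phi\widetilde{u}$ and define the matrix-valued operator $\mathcal{T}$ exactly as in (\ref{G_l,m})--(\ref{rep formula}).

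The proofs of Theorems \ref{thm contraction 0} and \ref{thm contraction 1} then go through word for word: the smallness factor $\omega(r)=r^\alpha\sup_{ij}|\widetilde{a}_{ij}|_{C_P^\alpha}$ still governs contractivity, and the algebraic steps use only the two ingredients above, so $\mathcal{T}$ is a contraction of $(C_{P,\ast}^{\alpha})^{q\times q}$ and of $(C_{P,\ast}^{1,\alpha})^{q\times q}$. Banach--Caccioppoli yields a unique fixed point $W$ in both spaces; since $\widetilde{X}_i\widetilde{X}_jv$ is itself a fixed point on the smaller ball where $a\equiv 1$, uniqueness forces $\widetilde{X}_i\widetilde{X}_jv = W\in C_P^{1,\alpha}$, and delifting produces $X_iX_ju\in C_P^{1,\alpha}$ on a smaller cylinder. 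The time regularity is read off the equation itself: $\partial_t u = f + \sum \overline{a}_{ij}X_iX_j u \in C_P^{1,\alpha}$, so altogether $u\in C_{P,loc}^{3,\alpha}(Q)$.

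The main difficulty is bookkeeping rather than new mathematics: one must verify that each object borrowed from \cite{BB2007} is formulated with respect to the parabolic distance $d_P$ and the correct weight-$2$ scaling for $\partial_t$. In particular, in the $C_P^{1,\alpha}$ contraction step one need only control \emph{spatial} derivatives $\widetilde{X}_k(\mathcal{T}F)$, since $\partial_t$ carries weight $2$ and does not enter the definition of $C_P^{1,\alpha}$; this is precisely why the exchange theorem for type-zero operators suffices and the stationary proof transfers with no essential modification.
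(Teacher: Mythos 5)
Your proposal is correct and follows exactly the route the paper itself takes: the paper's entire argument for Theorem \ref{Thm linear evolution} is the observation that the parabolic versions of the representation formula, the continuity of frozen operators, the exchange identity and the delifting proposition are all available in \cite{BB2007}, so the fixed-point argument of \S\ref{sec linear} transfers verbatim. Your additional remarks (that the lifting acts only on the spatial variables, that $C_P^{k,\alpha}$ contractivity only requires controlling the spatial derivatives $\widetilde{X}_k(\mathcal{T}F)$ because $\partial_t$ carries weight two, and that the time regularity is then read off the equation) are exactly the bookkeeping the paper leaves implicit.
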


In particular, in this case the a priori estimates proved in \cite{BB2007}
apply:
\[
\left\Vert u\right\Vert _{C_{P}^{k+2,\alpha}\left(  Q^{\prime}\right)
}\leqslant c\left\{  \left\Vert Hu\right\Vert _{C_{P}^{k,\alpha}\left(
Q\right)  }+\left\Vert u\right\Vert _{L^{\infty}\left(  Q\right)  }\right\}
\]
for any $Q^{\prime}\Subset Q$, with constant $c$ independent of $u$.

We also get the following quasilinear counterpart:

\begin{theorem}
\label{Thm evolution quasilinear}Let%
\[
\mathcal{Q}u\equiv\partial_{t}u-\sum_{i,j=1}^{q}a_{ij}\left(  t,x,u,Xu\right)
X_{i}X_{j}u+b\left(  t,x,u,Xu\right)
\]
where $X_{1},X_{2},...,X_{q}$ are as above, $a_{ij}=a_{ji},$
\[
\Lambda|\xi|^{2}\leq\sum_{i,j=1}^{q}a_{ij}\left(  t,x,u,p\right)  \xi_{i}%
\xi_{j}\leq\Lambda^{-1}|\xi|^{2}%
\]
$\forall\xi\in{\mathbb{R}}^{q},\left(  t,x,u,p\right)  \in\left(  0,T\right)
\times\Omega\times\mathbb{R}\times\mathbb{R}^{q}$, and assume that for some
$k=1,2,3,...,\alpha\in(0,1)$%
\[
a_{ij},b\in C_{P}^{k,\alpha}\left(  \left(  0,T\right)  \times\Omega
\times\mathbb{R}\times\mathbb{R}^{q}\right)  .
\]
If $u\in C_{P}^{2,\alpha}\left(  Q\right)  $ solves the equation
$\mathcal{Q}u=0$, then $u\in C_{P,loc}^{k+2,\alpha}\left(  Q\right)  $. In
particular, if $a_{ij},b$ are smooth, then $u$ is also smooth.
\end{theorem}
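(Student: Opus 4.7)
The plan is to reduce the quasilinear equation to a linear one with frozen coefficients and apply the linear evolution regularity result of Theorem~\ref{Thm linear evolution} in a bootstrap loop, mirroring exactly the stationary argument of Theorem~\ref{Thm main quasilinear}. Given a solution $u\in C_P^{2,\alpha}(Q)$ of $\mathcal{Q}u=0$, I would set
\[
\overline{a}_{ij}(t,x)=a_{ij}(t,x,u(t,x),Xu(t,x)),\qquad g(t,x)=-b(t,x,u(t,x),Xu(t,x)),
\]
so that $u$ satisfies the linear evolution equation
\[
Hu\equiv \partial_t u-\sum_{i,j=1}^{q}\overline{a}_{ij}(t,x)\,X_iX_ju=g(t,x)\quad\text{in }Q.
\]

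The first verification is that $\overline{a}_{ij},g\in C_P^{1,\alpha}(Q)$. Since $u\in C_P^{2,\alpha}(Q)$, the functions $u$ and $X_iu$ lie in $C_P^{1,\alpha}(Q)$; combined with the assumption $a_{ij},b\in C_P^{k,\alpha}$ with $k\geq 1$ in all arguments, a composition/chain-rule argument in the parabolic H\"older scale yields $\overline{a}_{ij},g\in C_P^{1,\alpha}(Q)$. Theorem~\ref{Thm linear evolution} applied with regularity index $1$ then gives $u\in C_{P,loc}^{3,\alpha}(Q)$, which proves the case $k=1$.

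For $k\geq 2$ I would iterate: assuming by induction $u\in C_{P,loc}^{l+2,\alpha}(Q)$ for some $1\leq l<k$, the functions $u, Xu$ are in $C_{P,loc}^{l+1,\alpha}$, and together with $a_{ij},b\in C_P^{k,\alpha}$ this gives $\overline{a}_{ij},g\in C_{P,loc}^{l+1,\alpha}(Q)$ (since $l+1\leq k$). Applying Theorem~\ref{Thm linear evolution} with index $l+1$ upgrades $u$ to $C_{P,loc}^{l+3,\alpha}(Q)$. After $k-1$ such iterations we reach $u\in C_{P,loc}^{k+2,\alpha}(Q)$. The smoothness statement when $a_{ij},b\in C^\infty$ follows as in Theorem~\ref{Thm main quasilinear}: the previous conclusion holds for every $k$, and H\"ormander's condition upgrades joint $C_{X,loc}^{k+2,\alpha}$ regularity (in space, uniformly in $t$) together with parabolic regularity in $t$ to Euclidean smoothness in $(t,x)$.

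The main obstacle I anticipate is the composition step, namely bounding the $C_P^{k,\alpha}$ norm of $(t,x)\mapsto F(t,x,u(t,x),Xu(t,x))$ in terms of the norms of $F$ and $u$. In the stationary setting this is routine via iterated use of the Leibniz bound \eqref{1.2} and the Lipschitz inequality \eqref{4.6}; in the evolution case one must additionally respect the fact that $\partial_t$ counts as a second-order weight in $C_P^{k,\alpha}$, so differentiating the composition in $t$ produces terms involving $X_iX_ju$ (already controlled in $C_P^{k,\alpha}$) times partials of $F$ in its last two slots. This bookkeeping is implicit in the parabolic framework set up in \cite{BB2007} and poses no substantive difficulty beyond the stationary case; once it is granted, the bootstrap above runs verbatim as in Theorem~\ref{Thm main quasilinear}.
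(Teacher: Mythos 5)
Your proposal is correct and follows exactly the route the paper intends: the paper gives no written proof of Theorem \ref{Thm evolution quasilinear}, simply asserting that it follows from Theorem \ref{Thm linear evolution} by the same freezing-and-bootstrap argument as in Theorem \ref{Thm main quasilinear}, which is precisely what you carry out. Your extra attention to the parabolic weight of $\partial_t$ in the composition step is a point the paper leaves implicit, and your bookkeeping there is sound.
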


\bigskip

Marco Bramanti

Dipartimento di Matematica

Politecnico di Milano

Via Bonardi 9

20133 Milano, ITALY

marco.bramanti@polimi.it

\bigskip

Maria Stella Fanciullo

Dipartimento di Matematica e Informatica

Universit\`{a} di Catania

Viale Andrea Doria 6

95125 Catania, ITALY

fanciullo@dmi.unict.it

\end{document}